\theoremstyle{plain}
\newtheorem{theor}{Theorem}[section]
\newtheorem{lem}[theor]{Lemma}
\newtheorem{prop}[theor]{Proposition}
\newtheorem{cor}[theor]{Corollary}
\theoremstyle{definition}
\newtheorem{rem}[theor]{Remark}
\newtheorem{defin}[theor]{Definition}
\mathchardef\emptyset="001F
\numberwithin{equation}{section}
\newcommand{\Xd}{\mathbb X}
\newcommand{\R}{\mathbb R}
\newcommand{\T}{\mathbb T}
\newcommand{\Pc}{\mathcal{P}}
\newcommand{\Id}{\operatorname{Id}}
\newcommand{\E}{\mathbb{E}}
\newcommand{\Var}{\operatorname{Var}}
\newcommand{\Div}{{\operatorname{div}}}
\newcommand{\Sym}{{\operatorname{sym}}}
\title[Correlation estimates for singular interactions]{Correlation estimates for Brownian particles\\with singular interactions}
\author[M.~Duerinckx]{Mitia Duerinckx}
\address{\sc M. Duerinckx. Universit\'e Libre de Bruxelles, D\'epartement de Ma\-th\'e\-matiques, 1050 Brussels, Belgium}
\email{mitia.duerinckx@ulb.be}
\author[P.-E. Jabin]{Pierre-Emmanuel Jabin}
\address[Pierre-Emmanuel Jabin]{Penn State University, Department of Mathematics, State College, PA 16802, USA}
\email{pejabin@psu.edu}
\begin{document}

\begin{abstract}
We study particle systems with singular pairwise interactions and non-vanishing diffusion in the mean-field scaling. A classical approach to describing corrections to mean-field behavior is through the analysis of correlation functions. For bounded interactions, the optimal estimates on correlations are well known: the $m$-particle correlation function is $G_{N,m}=O(N^{1-m})$ for all $m$. Such estimates, however, have remained out of reach for more singular interactions. In this work, we develop a new framework based on {\it linearized} correlation functions, which allows us to derive robust bounds for systems with merely square-integrable interaction kernels, providing the first systematic control of correlations in the singular setting. Although at first not optimal, our estimates can be partially refined a posteriori using the BBGKY hierarchy: in the case of bounded interactions, our method recovers the known optimal estimates with a simplified argument. As key applications, we establish the validity of the Bogolyubov correction to mean field and prove a central limit theorem for the empirical measure, extending these results beyond the bounded interaction regime for the first time.
\end{abstract}

\maketitle
\setcounter{tocdepth}{1}
\tableofcontents
\allowdisplaybreaks

\section{Introduction}

\subsection{General overview}
We consider the Langevin dynamics for a system of $N$ exchangeable particles with pairwise interactions and non-vanishing diffusion in velocities. Focusing for simplicity on systems on the periodic torus $\T^d$, in space dimension $d\ge1$, the dynamics is given by the following system of SDEs: denoting by $Z_{N,i}=(X_{N,i},V_{N,i})$ the positions and velocities of the particles in phase space $\Xd:=\T^d\times\R^d$,
\begin{equation}\label{eq:traj}
\left\{\begin{array}{ll}
dX_{N,i}=V_{N,i}dt,\\
dV_{N,i}=\frac1N\sum_{j:j\ne i}K(X_{N,i},X_{N,j})dt+\sqrt2dB_t^i,\qquad1\le i\le N,
\end{array}\right.
\end{equation}
where $K:(\T^d)^2\to\R^d$ is an interaction kernel and where $\{B^i\}_{1\le i\le N}$ are independent standard Brownian motions in $\T^d$. The scaling reflects the mean-field regime. Switching to a statistical perspective, we introduce the joint probability density $F_N$ on the $N$-particle phase space $\Xd^N$. The dynamics~\eqref{eq:traj} then formally leads to the Liouville equation
\begin{equation}\label{eq:Liouville}
\partial_tF_N+\sum_{i=1}^Nv_i\cdot\nabla_{x_i}F_N=\sum_{i=1}^N\triangle_{v_i}F_N-\frac1N\sum_{i\ne j}^NK(x_i,x_j)\cdot\nabla_{v_i}F_N.
\end{equation}
We assume the particles are exchangeable, meaning $F_N$ is symmetric with respect to permutations of the variables $z_i=(x_i,v_i)\in\Xd$. For simplicity, we assume that at initial time $t=0$ particles are chaotic in the sense of
\begin{equation}\label{eq:chaotic-data}
F_N|_{t=0}=f_\circ^{\otimes N},
\end{equation}
for some $f_\circ\in \Pc\cap L^1(\Xd)$, but this initial assumption is not really restrictive in view of de Finetti's theorem.
Although we focus on underdamped dynamics on $\T^d$, our methods extend with minor adaptations to particle systems on $\R^d$, with or without confinement, as well as to the corresponding overdamped dynamics (see Remark~\ref{rem:overdamped} below).

In the mean-field limit $N\uparrow\infty$, we consider the evolution of the joint phase-space density of finite subsets of typical particles, as described by marginals of $F_N$,
\[F_{N,m}(z_1,\ldots,z_m)\,:=\,\int_{\Xd^{N-m}}F_N(z_1,\ldots,z_N)\,dz_{m+1}\ldots dz_N,\qquad 1\le m\le N.\]
As is well-known, for chaotic initial data~\eqref{eq:chaotic-data}, particle correlations are expected to remain negligible over time to leading order: more precisely, for all $m\ge1$,
\[F_{N,m}-F_{N,1}^{\otimes m}\,\to\,0,\qquad\text{as $N\uparrow\infty$}.\]
As a consequence of this so-called propagation of chaos, we could deduce the mean-field approximation
\begin{equation}\label{eq:MFlim}
F_{N,m}\,\to\,f^{\otimes m},\qquad\text{as $N\uparrow\infty$},
\end{equation}
where $f$ satisfies the Vlasov-Fokker-Planck equation
\begin{equation}\label{eq:VFP}
\left\{\begin{array}{l}
\partial_tf+v\cdot\nabla_xf-\triangle_vf+(K\ast f)\cdot\nabla_vf=0,\\
f|_{t=0}=f_\circ,
\end{array}\right.
\end{equation}
with the short-hand notation $(K\ast f)(x)=\int_{\T^d\times\R^d}K(x,x')f(x',v')\,dx'dv'$. Such a result has been established in various settings and we refer e.g.\@ to recent work~\cite{BJS-22} and references therein.

To capture deviations from the mean-field limit~\eqref{eq:MFlim}, a classical approach is to study the associated correlation functions $\{G_{N,m}\}_{1\le m\le N}$, or so-called Ursell functions, which describe connected correlations between the particles. These considerations have their origins in equilibrium statistical mechanics, see in particular~\cite[Chapter~13]{Mayer-40} and~\cite[Chapter~4]{Ruelle-69}, and play an important role in some recent developments in kinetic theory, e.g.~\cite{PS-17,BGS-17,DSR-21,DJ-24}.
More precisely, correlation functions associated with the joint density $F_N$ are defined via the cluster expansion
\begin{equation}\label{eq:cluster-exp}
F_N(z_1,\ldots,z_N)=\sum_{\pi\vdash{}[ N]}\prod_{B\in\pi}G_{N,\sharp B}(z_B),
\end{equation}
where the sum is over partitions $\pi$ of the index set $[N]=\{1,\ldots,N\}$, where the product is over blocks of the partition,~$B\in\pi$, and where we use the short-hand notation $z_B=(z_{i_1},\ldots,z_{i_k})$ for $B=\{i_1,\ldots,i_k\}$. We also denote by $\sharp B$ the cardinality of the block $B$. Together with the `maximality' requirement
\[\int_{\Xd}G_{N,m}(z_1,\ldots,z_m)\,dz_\ell=0,\qquad\text{for $1\le \ell\le m$ and $m\ge2$,}\]
it is easily checked that this expansion~\eqref{eq:cluster-exp} uniquely defines correlation functions $\{G_{N,m}\}_{1\le m\le N}$.
Alternatively, via M\"obius inversion formula over the lattice of set partitions, correlation functions can be recursively defined by
\begin{equation}\label{eq:correl}
G_{N,m}(z_1,\ldots,z_m)=\sum_{\pi\vdash{}[ m]}(\sharp\pi-1)!(-1)^{\sharp\pi-1}\prod_{B\in\pi}F_{N,\sharp B}(z_B),\qquad1\le m\le N,
\end{equation}
where $\sharp\pi$ stands for the number of blocks in a partition $\pi$.
This means in particular
\begin{gather*}
G_{N,1}=F_{N,1},\qquad G_{N,2}=F_{N,2}-F_{N,1}^{\otimes2},\qquad G_{N,3}=F_{N,3}-3\,\Sym(F_{N,1}\otimes F_{N,2})+2F_{N,1}^{\otimes3},
\end{gather*}
and so on, where `$\Sym$' stands for symmetrization of tensor fields.
Formal considerations based on the BBGKY hierarchy suggest that these correlations should scale like
\begin{equation}\label{eq:scaling-GNm}
G_{N,m}=O(N^{1-m}).
\end{equation}
Such estimates would yield a refined understanding of the statistical deviations from the mean-field limit, but their derivation is challenging. So far, the bounds~\eqref{eq:scaling-GNm} have been rigorously established only for systems with smooth interactions~\cite{D-21} and for bounded interactions with non-vanishing diffusion~\cite{Hess_Childs_2023}. In this work, we aim to extend these results to systems with more singular interactions, covering the second-order setting~\eqref{eq:traj} where the diffusion is degenerate and acts only in velocity.

Handling unbounded interactions presents significant challenges, particularly for second-order dynamics. For instance, one cannot simply assume $K(x,\cdot)\in L^p(\Xd)^d$ since $K$ depends only on positions while velocities range over the unbounded space $\R^d$. As in~\cite{BJS-22}, this forces the introduction of suitable weighted spaces to avoid mixed-norm complications. At the same time, the diffusion in velocity remains crucial to estimate certain terms in the hierarchy.

Although we cannot reach the expected $O(N^{1-m})$ scaling in the singular setting, we show that suboptimal correlation bounds can still be derived and effectively used to capture corrections to the mean-field behavior. Our approach relies on hierarchical methods, with one key twist: since the BBGKY hierarchy for standard correlation functions becomes too singular in case of unbounded interactions, we develop a new framework of {\it linearized} correlation functions, inspired by~\cite{BDJ-24,BDM_25}, for which linear hierarchical methods remain tractable. This allows us to obtain direct a priori estimates that, although initially suboptimal, can be sharpened a posteriori by exploiting the BBGKY hierarchy. This a posteriori refinement is reminiscent of a similar procedure in~\cite{BDM_25}. In case of bounded interactions, our approach recovers the optimal estimates of~\cite{Hess_Childs_2023}, but with a significantly simpler argument.

\subsection{Main results}

The following result provides nontrivial $L^2$ estimates on correlation functions in case of square-integrable interaction forces deriving from a potential with exponential integrability. More precisely, we consider weighted $L^2$ norms with inverse Maxwellian weight
\[\omega_\beta(z)\,:=\,e^{\frac12\beta|v|^2},\qquad z=(x,v).\]
Our estimates do not match the expected scaling~\eqref{eq:scaling-GNm}: we only show $G_{N,m}=O(N^{-m/2})$ in $L^2$, cf.~\eqref{eq:L2bnd} below, together with the weak convergence $N^{m/2}G_{N,m}\rightharpoonup0$, cf.~\eqref{eq:wL2bnd}. We do not know whether stronger estimates can be expected to hold in the present singular setting. As we shall see, the present estimates are anyway already sufficient for various applications.

\begin{theor}[Correlation estimates]\label{th:main}
Assume that $K$ belongs to $L^2((\T^d)^2)^d$ and derives from a potential,
\[K(x,y)=-\nabla W(x-y),\]
and further assume that the latter satisfies, for some $\beta>0$,
\[W_-\in L^\infty(\T^d),\qquad \sup_{x\in\T^d}\int_{\T^d}(1+|K(x,y)|^2)\,e^{\beta W(y)}\,dy<\infty,\]
where $W_-$ stands for the negative part of $W$.
Assume that there is some $T_0>0$ and a weak solution $f$ of~\eqref{eq:VFP} on $[0,T_0]$ with, for some $\beta>0$,
\begin{eqnarray*}
f&\in& L^\infty_t(0,T_0;L^\infty_x(\T^d;L^2_v(\omega_\beta))),\\
W\ast f&\in& W^{1,\infty}_t(0,T_0;L^\infty_x(\T^d))\cap L^\infty_t(0,T_0;W^{1,\infty}_x(\T^d)).
\end{eqnarray*}
Then there exist a time $T_*\in(0,T_0]$ and some $\beta_*>0$ such that for all $2\le m\le N$ and $t\in[0,T_*]$,
\begin{equation}\label{eq:L2bnd}
\Big(\int_{\Xd^m}|G_{N,m}(t)|^2\omega_{\beta_*}^{\otimes m}\Big)^\frac12\,\le\, C_mN^{-\frac m2},
\end{equation}
for some constant $C_m$ only depending on $d,m,K,f$.
In addition, in the weak sense, for all $m\ge3$,
\begin{equation}\label{eq:wL2bnd}
N^{\frac m2}G_{N,m}\overset*\rightharpoonup0,\qquad\text{in $L^\infty(0,T_*;L^2(\omega_{\beta_*}^{\otimes m}))$}.
\end{equation}
\end{theor}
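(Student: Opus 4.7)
The strategy is to follow the two-step philosophy described in the introduction. The backbone is the replacement of the nonlinear Ursell coefficients $G_{N,m}$ defined in~\eqref{eq:correl} by \emph{linearized} correlations $\tilde G_{N,m}$ obtained, roughly, by freezing the single-particle marginal $F_{N,1}$ at the mean-field solution $f$ in all nonlinear terms of the M\"obius inversion and retaining only the genuinely $m$-particle cluster structure. The key virtue of this substitution is that the BBGKY hierarchy satisfied by $\{\tilde G_{N,m}\}_m$, which one derives from~\eqref{eq:Liouville} by taking marginals and isolating cluster components, becomes \emph{linear} with respect to $\tilde G_{N,m}$: schematically,
\[
\partial_t \tilde G_{N,m} + \mathcal L_m \tilde G_{N,m}
\;=\; \tfrac1N\,\mathcal C_{m,m+1}[\tilde G_{N,m+1}] + \tfrac1N\,\mathcal S_m[\tilde G_{N,m-1},f],
\]
where $\mathcal L_m$ is a Fokker-Planck linearization around $f$ and $\mathcal C_{m,m+1},\mathcal S_m$ are elementary interaction operators. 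The prefactor $1/N$ on the right-hand side is what ultimately produces the $N^{-m/2}$ scaling.

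I would then test the $m$-th equation against $\tilde G_{N,m}\,\omega_{\beta_*}^{\otimes m}$ and integrate over $\Xd^m$. The transport terms cancel up to commutators with the Maxwellian weight; the velocity diffusion contributes a positive dissipation $\|\nabla_v \tilde G_{N,m}\|_{L^2(\omega_{\beta_*}^{\otimes m})}^2$; and the delicate interaction contribution $K\cdot\nabla_{v_i}\tilde G_{N,m}$ is handled by integrating $\nabla_v$ by parts onto $\tilde G_{N,m}$ itself and absorbing the resulting $\int|K|^2 f\,e^{\beta_* W}$ into the dissipation, which uses precisely the assumption $\sup_x\int|K(x,\cdot)|^2 e^{\beta W}<\infty$ (so that $W$ combines with the Maxwellian weight to produce a Gaussian reweighting consistent with $f$). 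Multiplying the $m$-th estimate by $N^m/m!$ and summing over $m$ yields a Gr\"onwall inequality on the generating functional $\Phi_N(t)=\sum_{m\ge 2}\frac{N^m}{m!}\|\tilde G_{N,m}(t)\|_{L^2(\omega_{\beta_*}^{\otimes m})}^2$, which closes on a short interval $[0,T_*]$ provided $\beta_*$ is chosen small enough. Since chaotic data~\eqref{eq:chaotic-data} make $\tilde G_{N,m}$ vanish at $t=0$ for $m\ge 2$, this produces $\|\tilde G_{N,m}\|_{L^2(\omega_{\beta_*}^{\otimes m})}\lesssim_m N^{-m/2}$. Since $F_{N,1}-f$ is itself $O(N^{-1/2})$ (controlled as the $m=1$ instance of the same analysis), a combinatorial bookkeeping comparing the cluster expansions of $\tilde G_{N,m}$ and $G_{N,m}$ then yields~\eqref{eq:L2bnd}.

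For the refined statement~\eqref{eq:wL2bnd}, I would return to the genuine BBGKY hierarchy for the true $G_{N,m}$, in which the equation for $G_{N,m}$ couples to $G_{N,m+1}$ via a term carrying a factor $1/N$. Testing against a smooth function on $\Xd^m$ and integrating by parts in $v$ on the diffusion and in $(x,v)$ on the transport and interaction terms shifts all derivatives onto the test function; the coupling to $G_{N,m+1}$ then contributes order $N^{-1}\cdot N^{-(m+1)/2}=N^{-(m+3)/2}$, one extra factor of $N^{-1/2}$ beyond the naive scaling, which is exactly what~\eqref{eq:wL2bnd} asserts. For $m\ge 3$ all other source terms in the BBGKY equation for $G_{N,m}$ already vanish faster than $N^{-m/2}$ in view of~\eqref{eq:L2bnd} for smaller indices, whereas $m=2$ must be excluded because the bilinear source $F_{N,1}\otimes G_{N,2}$ saturates at order $N^{-1}$. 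The main obstacle throughout is the energy estimate for the linearized hierarchy in the singular setting: controlling $K\cdot\nabla_v$ with merely $K\in L^2$ forces a careful balance between the velocity-diffusion dissipation and the exponential-moment hypothesis on $W$, and propagating the Maxwellian weight uniformly across the infinite tower of equations is what restricts the argument to a short time $T_*$ and to a degraded weight $\beta_*<\beta$.
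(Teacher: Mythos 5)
Your overall philosophy does match the paper's (linearize the correlations around $f$, estimate the resulting linear hierarchy in weighted $L^2$, then pass to the limit in the nonlinear hierarchy for~\eqref{eq:wL2bnd}), but the quantitative core of your argument misstates the structure of the linear hierarchy, and the estimate you propose would not close as written. In the actual hierarchy (Lemma~\ref{lem:BBGKY-HNm}), the coupling of $H_{N,m}$ to $H_{N,m+1}$ carries the prefactor $\frac{N-m}{N}=O(1)$, \emph{not} $1/N$; the $O(1/N)$ terms are the couplings \emph{downward} to $H_{N,m-1}$ and, crucially, to $H_{N,m-2}$, seeded by $H_{N,0}=1$. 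This ``gain $N^{-1}$ per two levels'' is what produces the $N^{-m/2}$ scaling, whereas your schematic (a $1/N$ prefactor on both the $(m+1)$- and $(m-1)$-couplings) is internally inconsistent with that scaling and, more importantly, hides the real difficulty: the $O(1)$ upward coupling. Because of it, a plain Gr\"onwall inequality for $\Phi_N(t)=\sum_m\frac{N^m}{m!}\|H_{N,m}(t)\|_{L^2(\omega_{\beta_*}^{\otimes m})}^2$ does not close: after reindexing, both the upward and the downward couplings contribute coefficients of size $m$ (which ranges up to $N$), so one only gets $\partial_t\Phi_N\lesssim\sum_m m\,\frac{N^m}{m!}\|H_{N,m}\|^2$, which is not $\lesssim\Phi_N$ on any time interval. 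The paper handles this with a two-step device that your sketch omits: first a Cauchy--Kovalevskaya-type generating function $Z(t,r)=\sum_m r^mA_{N,m}$ with $\partial_tZ\le C\partial_rZ$, which on $[0,T_*]$ yields only the exponential a priori bound $A_{N,m}\le2^m$ with \emph{no} decay in $N$; and then the separate bootstrap of Lemma~\ref{lem:hier} (a truncated Dyson expansion cut off by the a priori bound), which converts the $N^{-2}$-prefactored downward couplings and the chaotic initial data into $A_{N,m}\lesssim_m N^{-m}$. Some device of this kind (or at least a time-shrinking analytic-type weight playing the role of $\partial_r$) is indispensable; your single fixed-weight Gr\"onwall is a step that fails.

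Two further points. With $K$ merely in $L^2$, the pure Maxwellian weight you use does not suffice: integrating the term $\frac1N\sum_{i\ne j}K(x_i,x_j)\cdot\nabla_{v_i}H_{N,m}$ by parts against $H_{N,m}\,\omega_{\beta_*}^{\otimes m}$ leaves $\frac1N\sum_{i\ne j}\int|K(x_i,x_j)|\,|v_i|\,|H_{N,m}|^2\omega_{\beta_*}^{\otimes m}$, which cannot be absorbed into the velocity dissipation unless $K\in L^\infty$. This is precisely why the paper's weight $\omega_{N,m}$ includes the Gibbs factors $\frac1{2N}\sum_{i\ne j}W(x_i-x_j)$ and $W\ast f$: the Hamiltonian transport then (almost) annihilates the exponent, and only terms controlled by $\|W_-\|_{L^\infty}$ and $\sup_x\int(1+|K(x,y)|^2)e^{\beta W(y)}dy$ remain--this is where the assumption $K=-\nabla W$ is genuinely used, and your sketch provides no substitute. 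Finally, for~\eqref{eq:wL2bnd} your bookkeeping again invokes a nonexistent $1/N$ prefactor on the $G_{N,m+1}$-coupling (in Lemma~\ref{lem:hier-corr} it is $\frac{N-m}{N}$; that term vanishes after rescaling only because $G_{N,m+1}=O(N^{-(m+1)/2})$), and the conclusion does not follow merely from smallness of sources: one must extract weak limits $\bar G_m$ of $N^{m/2}G_{N,m}$ by compactness, pass to the limit in the hierarchy--where the terms of exact order $N^{-m/2}$ survive and form a linearized Vlasov--Fokker--Planck equation for $\bar G_m$--and then conclude $\bar G_m\equiv0$ for $m\ge3$ by uniqueness for that limit equation with zero data and zero source, the $m=2$ source being the only one that saturates.
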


\begin{rem}[Overdamped dynamics]\label{rem:overdamped}
For the corresponding overdamped particle system (first-order dynamics), the same correlation estimates hold on $L^2(\T^{dm})$ if we only assume $K\in L^2((\T^d)^2)^d$ with $(\Div(K))_-\in L^\infty((\T^d)^2)$, without requiring $K$ to derive from a potential.
\end{rem}

As a first application of the above correlation estimates, we deduce the (qualitative) accuracy of the Bogolyubov correction to mean-field approximation. Note that propagation of chaos in item~(i) below can be proven for more singular interactions, cf.~\cite{BJS-22}, but this is the first result regarding the Bogolyubov correction beyond the case of bounded interactions~\cite{D-21,Hess_Childs_2023}.

\begin{cor}[Bogolyubov correction]\label{cor:Bogo}
Let the same assumptions hold as in Theorem~\ref{th:main}.
Next to the solution $f$ of the mean-field equation~\eqref{eq:VFP}, we consider the weak solution $f_N$ of the corrected equation
\begin{equation*}
\left\{\begin{array}{l}
\partial_tf_N+v\cdot\nabla_xf_N-\triangle_vf_N+(K\ast f_N)\cdot\nabla_vf_N
=
-\tfrac1N\int_\Xd K(x-x_*)\cdot\nabla_{v}(\bar G_2-f^{\otimes2})(z,z_*)\,dz_*,\\
f_N|_{t=0}=f_\circ,
\end{array}\right.
\end{equation*}
where $\bar G_2$ satisfies the Bogolyubov equation
\begin{equation}\label{eq:Bogo}
\left\{\begin{array}{l}
\partial_t \bar G_2+L_f^{(2)}\bar G_2
\,=\,
-\sum_{k\ne\ell}^2\Big(K(x_k-x_\ell)-(K\ast f)(x_k)\Big)\cdot\nabla_{v_k}f^{\otimes2},\\
g|_{t=0}=0.
\end{array}\right.
\end{equation}
where $L_f^{(2)}$ is the $2$-particle Vlasov operator linearized at $f$, on $L^2(\omega_\beta^{\otimes2})$,
\[L_f^{(2)}=L_f\otimes\Id+\Id\otimes L_f,\qquad L_fh=v\cdot\nabla_{x}h-\triangle_{v}h+(K\ast f)\cdot\nabla_{v}h+(K\ast h)(x)\cdot\nabla_{v}f.\]
Then the following results hold:
\begin{enumerate}[(i)]
\item \emph{Propagation of chaos:} for all $1\le m\le N$ and $t\in[0,T_*]$,
\[\|F_{N,m}(t)-f(t)^{\otimes m}\|_{L^2(\omega_{\beta_*}^{\otimes m})}\le C_mN^{-1},\]
for some constant $C_m$ only depending on $d,m,K,f$.
\smallskip\item \emph{Bogolyubov correction:}
\begin{equation*}\begin{array}{rlll}
N(F_{N,1}-f_N)&\overset*\rightharpoonup& 0,\qquad&\text{in $L^\infty(0,T_*;L^2(\omega_{\beta_*}))$,}\\
NG_{N,2}&\overset*\rightharpoonup&\bar G_2,\qquad&\text{in $L^\infty(0,T_*;L^2(\omega_{\beta_*}^{\otimes2}))$,}
\end{array}\end{equation*}
and in addition, for all $m\ge1$,
\[\qquad N(F_{N,m}-f_N^{\otimes m})\,
\overset*\rightharpoonup\,\sum_{1\le k<\ell\le m}\bar G_2(z_k,z_\ell)f^{\otimes m-2}(z_{[m]\setminus\{k,\ell\}}),\quad\text{in $L^\infty(0,T_*;L^2(\omega_{\beta_*}^{\otimes m}))$.}\]
\end{enumerate}
\end{cor}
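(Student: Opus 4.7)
The plan is to combine Theorem~\ref{th:main} with the BBGKY hierarchy. The bound on $G_{N,2}$ is sharp enough to close an $O(1/N)$ stability estimate for $F_{N,1}-f$, while the hierarchy equation for $G_{N,2}$ is driven at leading order precisely by the source of~\eqref{eq:Bogo}, so that the estimates of Theorem~\ref{th:main} identify the weak limit $NG_{N,2}\overset*\rightharpoonup \bar G_2$. The cluster expansion~\eqref{eq:cluster-exp} then lifts both (i) and (ii) from the one- and two-particle levels to arbitrary $m$.

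For (i), the first BBGKY equation, after writing $F_{N,2}=F_{N,1}^{\otimes 2}+G_{N,2}$, takes the form
\[\partial_tF_{N,1}+v\cdot\nabla_xF_{N,1}-\triangle_vF_{N,1}+(K\ast F_{N,1})\cdot\nabla_vF_{N,1}=\tfrac1N(K\ast F_{N,1})\cdot\nabla_vF_{N,1}-\tfrac{N-1}{N}\int K\cdot\nabla_vG_{N,2}\,dz'.\]
Subtracting~\eqref{eq:VFP} and linearizing the quadratic drift around $f$ yields $\partial_t(F_{N,1}-f)+L_f(F_{N,1}-f)=O_{L^2(\omega_{\beta_*})}(1/N)$ by~\eqref{eq:L2bnd} with $m=2$ (after integration by parts in $v$, using $K\in L^2$). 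Weighted $L^2$ stability of $L_f$, available under the regularity assumptions on $f$ and $K\ast f$, then gives $\|F_{N,1}-f\|_{L^2(\omega_{\beta_*})}\lesssim 1/N$. The general-$m$ bound follows from the cluster expansion
\[F_{N,m}-f^{\otimes m}=(F_{N,1}^{\otimes m}-f^{\otimes m})+\sum_{\pi\vdash[m]:\,\pi\ne\{\{1\},\ldots,\{m\}\}}\prod_{B\in\pi}G_{N,\sharp B},\]
since the first term is $O(1/N)$ by telescoping while each non-singleton partition contains a block of size $\ge 2$ and hence contributes at most $N^{-1}$ via~\eqref{eq:L2bnd}.

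For (ii), a direct computation using the hierarchies for $F_{N,1}$ and $F_{N,2}$ together with $F_{N,3}=G_{N,3}+3\,\Sym(G_{N,2}\otimes F_{N,1})+F_{N,1}^{\otimes 3}$ produces
\[\partial_tG_{N,2}+L_{F_{N,1}}^{(2)}G_{N,2}=-\tfrac1N\sum_{k\ne\ell}^2\Big(K(x_k-x_\ell)-(K\ast F_{N,1})(x_k)\Big)\cdot\nabla_{v_k}F_{N,1}^{\otimes 2}+R_N,\]
where $R_N$ regroups the remaining contributions, each carrying an extra factor of $1/N$ together with either $G_{N,2}$ or $G_{N,3}$. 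Multiplying by $N$, the bound~\eqref{eq:L2bnd} ensures that $NG_{N,2}$ is weakly-$*$ precompact in $L^\infty_tL^2(\omega_{\beta_*}^{\otimes 2})$; the strong convergence from (i) lets me pass to the limit in the coefficients of $L_{F_{N,1}}^{(2)}$ and in the source, while $NR_N$ vanishes weakly thanks to~\eqref{eq:wL2bnd} (for the $G_{N,3}$ piece) and the extra $1/N$ factor (for the $G_{N,2}$ pieces). The limit solves~\eqref{eq:Bogo}, whose uniqueness forces $NG_{N,2}\overset*\rightharpoonup\bar G_2$. Subtracting the equation defining $f_N$ from the first BBGKY equation and multiplying by $N$, $r_N:=N(F_{N,1}-f_N)$ satisfies a linear equation with source $\int K\cdot\nabla_v(\bar G_2-NG_{N,2})\,dz_*+o_{L^2}(1)$, which vanishes weakly; hence $r_N\overset*\rightharpoonup 0$. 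The general-$m$ statement is recovered from the cluster expansion
\[F_{N,m}-f_N^{\otimes m}=(F_{N,1}^{\otimes m}-f_N^{\otimes m})+\sum_{1\le k<\ell\le m}G_{N,2}(z_k,z_\ell)F_{N,1}^{\otimes(m-2)}(z_{[m]\setminus\{k,\ell\}})+\mathcal R_{N,m},\]
where the first term is $o(1/N)$ by $r_N\overset*\rightharpoonup 0$ applied slot-by-slot, the pairwise sum converges weakly to the announced limit after strong replacement of $F_{N,1}$ by $f$, and $\mathcal R_{N,m}=o(1/N)$ because the remaining partitions contain either a block of size $\ge 3$ or at least two blocks of size $\ge 2$.

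The main obstacle lies in the hierarchy step for $G_{N,2}$: isolating the combinatorial structure that makes the leading source coincide with~\eqref{eq:Bogo}, carefully tracking all $1/N$ cancellations so that $NR_N$ genuinely vanishes weakly, and controlling every term in a topology compatible with the singularity of $K$. The last point is handled throughout by integrating by parts against $\nabla_v$ and exploiting the integrability assumption $K\,e^{\beta W/2}\in L^2$ from Theorem~\ref{th:main} together with the regularity of $f$ and of $K\ast f$.
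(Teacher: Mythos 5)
Your proposal is correct in substance, and for part~(ii) it follows essentially the same route as the paper: weak-$*$ compactness of $NG_{N,2}$ from~\eqref{eq:L2bnd}, passage to the limit in the hierarchy for nonlinear correlations (the paper invokes Lemma~\ref{lem:hier-corr} and the limit equation~\eqref{eq:conv-lim-GNm} already derived in Section~3), identification of $\bar G_2$ by well-posedness of the Bogolyubov equation~\eqref{eq:Bogo}, treatment of $N(F_{N,1}-f_N)$ through the first BBGKY equation, and the cluster expansion~\eqref{eq:cluster-exp} combined with weak-times-strong limits for general $m$. The genuine difference is part~(i): the paper does not use the correlation bounds there at all, but runs a self-contained weighted $L^2$ estimate on the full hierarchy for $F_{N,m}-f^{\otimes m}$ (with the Gibbs-type weights $\omega_{N,m}'$) and closes it with a generating-function argument; you instead close a single stability estimate at the level $m=1$, using $\|G_{N,2}\|_{L^2(\omega_{\beta_*}^{\otimes2})}\le C_2N^{-1}$ from~\eqref{eq:L2bnd} as a source, and lift to all $m$ by the cluster expansion (every non-singleton partition carries at least one block of size $\ge2$, hence a factor $N^{-1}$, while $F_{N,1}^{\otimes m}-f^{\otimes m}$ telescopes). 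This is legitimate since Theorem~\ref{th:main} is assumed, and it is shorter; what the paper's version buys is independence from the correlation estimates and reuse of exactly the machinery of Proposition~\ref{prop:HNm}.

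Two points should be stated more carefully, though neither is fatal. First, the right-hand side of your linearized equation for $F_{N,1}-f$ is not $O_{L^2(\omega_{\beta_*})}(N^{-1})$ as written: the source $\int_\Xd K(x-x_*)\cdot\nabla_vG_{N,2}(z,z_*)\,dz_*$ and the quadratic remainder $(K\ast(F_{N,1}-f))\cdot\nabla_v(F_{N,1}-f)$ involve uncontrolled velocity derivatives, and only become $O(N^{-1})$ after integrating by parts inside the weighted energy estimate, absorbing $\nabla_v$ and $|v|$ factors into the dissipation generated by the time-dependent Maxwellian weight, exactly as in Proposition~\ref{prop:HNm}; your parenthetical remark suggests you intend this, but it is where the actual work sits. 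Second, to conclude $r_N=N(F_{N,1}-f_N)\overset*\rightharpoonup0$ you need weak-$*$ compactness of $r_N$, i.e.\ a uniform $L^\infty(0,T_*;L^2(\omega_{\beta_*}))$ bound on $N(f_N-f)$ coming from a stability estimate for the corrected equation (the paper instead identifies the limit $h$ of $N(F_{N,1}-f)$ and compares $f_N$ with $f+\frac1Nh$), and you then need uniqueness for the homogeneous linearized equation to kill all limit points. Finally, a small misattribution: the $G_{N,3}$ contribution to $NR_N$ is already $O(N^{-1/2})$ strongly by~\eqref{eq:L2bnd}, so~\eqref{eq:wL2bnd} is not needed at that step.
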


As a second application of the above correlation estimates, we establish a (qualitative) central limit theorem (CLT) for fluctuations of the empirical measure,
\[\mu_N\,:=\,\textstyle\frac1N\sum_{i=1}^N\delta_{Z_{N,i}},\]
where we recall that $t\mapsto\{Z_{N,i}(t)\}_{1\le i\le N}$ stands for particle trajectories~\eqref{eq:traj}, which are well-posed in the strong sense if we further assume e.g.\@ $K\in W^{1,1}$, cf.~\cite{Champagnat-Jabin,LeBris-Lions-19}. Due to their subcritical scaling, correlation estimates~\eqref{eq:L2bnd} are of no use on their own to deduce a CLT, but we can rely on the weak convergence~\eqref{eq:wL2bnd}.
To the best of our knowledge, this is the first CLT for the empirical measure beyond the case of bounded interactions.
As the proof relies on hierarchical techniques, it only captures the fluctuations of the empirical measure at fixed times, without providing information about time correlations along trajectories.
In the case of bounded interactions, a functional CLT for the law of the full process $\{N^{1/2}(\mu_N-f)(t)\}_{t\ge0}$ was established in~\cite{Wang-Zhao-Zhu-23}, and optimal error estimates were obtained in~\cite{D-21,BD_2024} for smooth interactions.

\begin{cor}[CLT]\label{cor:CLT}
Let the same assumptions hold as in Theorem~\ref{th:main}, and further assume that~$K$ belongs to $W^{1,1}((\T^d)^2)^d$ to ensure strong well-posedness of the trajectories. Then we have for all $t\in[0,T_*]$ and $\varphi\in C^\infty_c(\Xd)$,
\begin{equation}\label{eq:CLT-conv}
N^\frac12\int_\Xd\varphi(\mu_N-f)(t)\,\to\,\int_\Xd\varphi\nu(t),\qquad\text{in law},
\end{equation}
where the limit fluctuation $\nu$ is the centered Gaussian process that is the unique almost sure distributional solution of the Gaussian linearized Dean-Kawasaki SPDE,
\[\left\{\begin{array}{l}
\partial_t\nu+v\cdot\nabla_x\nu-\triangle_v\nu+(K\ast\nu)\cdot\nabla_vf+(K\ast f)\cdot\nabla_v\nu=\Div_v(\sqrt f\xi),\\
\nu|_{t=0}=\nu^\circ,
\end{array}\right.\]
where $\xi$ is a vector-valued space-time white noise on $\R^+\times\T^d\times\R^d$ and where $\nu^\circ$ is the Gaussian field describing the fluctuations of the initial empirical measure in the sense that $N^{1/2}\int_\Xd\varphi(\mu_N-f)|_{t=0}$ converges in law to $\int_\Xd\varphi\nu^\circ$ for all $\varphi\in C^\infty_c(\Xd)$.
\end{cor}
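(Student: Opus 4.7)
The plan is the method of moments. Fix $\varphi\in C^\infty_c(\Xd)$ and $t\in[0,T_*]$, and center by setting $\psi_N := \varphi - \int\varphi\,F_{N,1}(t)$, so that $\int\psi_N F_{N,1} = 0$. Writing
\[Y_N := N^{1/2}\int\varphi(\mu_N-f)(t) = N^{-1/2}\sum_{i=1}^N\psi_N(Z_{N,i}(t)) + N^{1/2}\int\varphi(F_{N,1}-f)(t),\]
the deterministic remainder is $O(N^{-1/2})$ by item~(i) of Corollary~\ref{cor:Bogo} with $m=1$. It therefore suffices to show that the random part $Z_N := N^{-1/2}\sum_i\psi_N(Z_{N,i}(t))$ has all moments converging to those of a centered Gaussian with variance $\sigma^2(\varphi,t) := \int\psi^2 f + \int\psi^{\otimes 2}\bar G_2(t)$, where $\psi := \varphi - \int\varphi f(t)$. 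Joint convergence at finitely many test functions then follows from the Cram\'er-Wold device.

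The core step is a moment expansion. Grouping $\E[(\sum_i\psi_N(Z_{N,i}))^p]$ by the partition $\pi\vdash[p]$ recording coincidences among indices, invoking exchangeability, and expanding each marginal $F_{N,|\pi|}$ via the cluster formula~\eqref{eq:cluster-exp}, one arrives at
\[\E\big[Z_N^p\big] = N^{-p/2}\sum_{\pi\vdash[p]}\frac{N!}{(N-|\pi|)!}\sum_{\sigma\vdash[|\pi|]}\int\prod_{B\in\pi}\psi_N^{|B|}(z_B)\prod_{C\in\sigma}G_{N,|C|}(z_C).\]
The bound~\eqref{eq:L2bnd} makes each $G_{N,|C|}$ factor of order $N^{-|C|/2}$; together with the $N^{|\pi|}$ coming from the ordered count, every term is at most $O(1)$. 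The centering identity $\int\psi_N F_{N,1}=0$ annihilates any pair $(\pi,\sigma)$ containing a $\sigma$-singleton lying over a $\pi$-singleton, and the weak convergence~\eqref{eq:wL2bnd} kills every pair featuring a $\sigma$-block of size $\ge 3$, tested against the smooth factor $\prod_j\psi^{b_j}$. The only surviving $O(1)$ contributions come from structures in which each $\pi$-block has size $1$ or $2$, each $\pi$-pair is a $\sigma$-singleton (producing $\int\psi_N^2 F_{N,1}\to\int\psi^2 f$), and the $\pi$-singletons are grouped into $\sigma$-pairs (each producing $N\int\psi_N^{\otimes 2}G_{N,2}\to\int\psi^{\otimes 2}\bar G_2(t)$ by Corollary~\ref{cor:Bogo}).

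Performing the combinatorial sum over these surviving structures yields $\E[Z_N^{2n}]\to(2n-1)!!\,\sigma^{2n}$ and $\E[Z_N^{2n+1}]\to 0$, which are exactly the moments of a centered Gaussian of variance $\sigma^2$, so that $Z_N$ converges in law to that Gaussian. The final step is to identify $\sigma^2(\varphi,t)$ with $\Var(\int\varphi\nu(t))$ for $\nu$ solving the linearized Dean-Kawasaki SPDE. Writing $\nu(t)$ in mild form, driven by the initial Gaussian fluctuation $\nu^\circ$ (with formal covariance kernel $f_\circ(z)\delta(z-z')-f_\circ(z)f_\circ(z')$) and the It\^o integral against $\Div_v(\sqrt f\xi)$, one checks by direct computation that the two-point covariance of $\nu(t)$ decomposes as $f(t,z)\delta(z-z')-f(t,z)f(t,z')+\bar G_2(t,z,z')$, with the last piece satisfying the Bogolyubov equation~\eqref{eq:Bogo}; pairing with $\varphi\otimes\varphi$ and using $\int\psi f=0$ then delivers the announced identity.

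The main obstacle is the scaling analysis: the $L^2$ bound~\eqref{eq:L2bnd} is by itself insufficient to rule out spurious contributions from $G_{N,m}$ with $m\ge3$, which would otherwise pollute the $p$-th moment at order $N^{p/2}$. The refined weak convergence~\eqref{eq:wL2bnd} is therefore indispensable, and must be applied carefully, since the test functions against which $N^{m/2}G_{N,m}$ is paired are products of powers of $\psi$ reweighted by $(\omega_{\beta_*}^{\otimes m})^{-1}$; this is admissible thanks to $\varphi\in C^\infty_c$, but the compatibility of the exponential weights with the support of $\varphi$ must be verified. The variance identification in the last step is more routine but still requires careful matching of the mild formulations of the SPDE and of~\eqref{eq:Bogo}.
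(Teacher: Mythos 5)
Your proposal is correct and follows essentially the same route as the paper: a moment method driven by the correlation estimates, with the subcritical $L^2$ bounds~\eqref{eq:L2bnd} giving $O(1)$ control of each partition term, the weak convergence~\eqref{eq:wL2bnd} killing all contributions from correlations of order $\ge3$, the pair contributions identified through Corollary~\ref{cor:Bogo}, and the limiting variance matched to the linearized Dean--Kawasaki solution. The only cosmetic difference is that you expand raw moments and extract the Wick-pairing combinatorics by hand, whereas the paper packages the same bookkeeping through the cumulant--correlation formula of~\cite[Proposition~4.1]{D-21} and the vanishing of cumulants of order $\ge3$.
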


\subsection{Improved estimates for $K\in L^\infty$}
In case of bounded interaction forces, we show that all our estimates can be improved: we obtain the optimal scaling~\eqref{eq:scaling-GNm} and the estimates are shown to hold globally in time (up to exponential time growth). In this way, we recover the result previously proven in~\cite{Hess_Childs_2023}, but with a substantially shorter proof.
Note that we do not try to improve on the $m$-dependence in these estimates.
We refer to Section~\ref{sec:Linfty} for details.

\begin{theor}\label{th:main-Linfty}
Let $K\in L^\infty((\T^d)^2)^d$, $f_\circ\in L^2(\omega_\beta)$ for some $\beta>0$, and let $f$ be a global weak solution of~\eqref{eq:VFP}.
Then the following improved correlation estimates hold for all $1\le m\le N$ and $t\ge0$,
\begin{equation}\label{eq:Linftybnd}
\Big(\int_{\Xd^m}|G_{N,m}(t)|^2\omega_{\beta(t)}^{\otimes m}\Big)^\frac12\,\le\, (Ce^{Ct})^{C^m}N^{1-m},
\end{equation}
where we have set $\beta(t):=\frac{\beta}{1+4\beta t}$ and where the constant $C$ only depends on $d,\beta,K,f_\circ$.
\end{theor}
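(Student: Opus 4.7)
The plan is to combine the suboptimal correlation estimates of Theorem~\ref{th:main} (which in the bounded setting hold globally in time) with an a posteriori bootstrap driven by the BBGKY hierarchy for the Ursell functions $G_{N,m}$. Since $K\in L^\infty$, the hypotheses of Theorem~\ref{th:main} are satisfied globally, so as input one obtains a starting estimate of the form $\|G_{N,m}(t)\|_{L^2(\omega_{\beta(t)}^{\otimes m})}\le C_m\,e^{C_m t}\,N^{-m/2}$, and the task reduces to improving the exponent from $m/2$ up to $m-1$.

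The first step is to derive, from~\eqref{eq:Liouville} and the cluster/M\"obius identity~\eqref{eq:correl}, a closed evolution equation for $G_{N,m}$ of the form
\begin{equation*}
\partial_t G_{N,m} + L_f^{(m)} G_{N,m} \,=\, S_{N,m}^{\mathrm{cl}} + S_{N,m}^{\mathrm{prod}} + S_{N,m}^{\mathrm{self}},
\end{equation*}
where $L_f^{(m)}=\sum_{i=1}^m L_f^{(i)}$ is the $m$-particle Vlasov--Fokker--Planck operator linearized at $f$ (as introduced for $m=2$ in Corollary~\ref{cor:Bogo}). The source decomposes into: a closure term $S_{N,m}^{\mathrm{cl}}$ with coefficient $(N-m)/N$, of the form $\sum_i\nabla_{v_i}\cdot\int K(x_i,x_*)\,G_{N,m+1}(\ldots,z_*)\,dz_*$; a product term $S_{N,m}^{\mathrm{prod}}$ collecting nonlinear couplings $\nabla_{v_i}(G_{N,k}\otimes G_{N,\ell})$ over partitions of the indices with $\sharp B+\sharp B'\ge m+1$ and $\sharp B,\sharp B'\ge1$; and an intra-cluster self-interaction $S_{N,m}^{\mathrm{self}}$ with prefactor $1/N$.

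The next step is the standard weighted $L^2$ energy method for this equation. The time-dependent weight $\omega_{\beta(t)}^{\otimes m}$ with $\beta(t)^{-1}=\beta^{-1}+4t$ is precisely tuned so that the contribution of the commutator of $\sum_i\triangle_{v_i}$ with the weight is absorbed after integration by parts; the drift terms $(K\ast f)\cdot\nabla_v$ and $(K\ast\cdot)(x)\cdot\nabla_v f$ are harmless since $K\in L^\infty$ and $f\in L^\infty_tL^2_v(\omega_\beta)$; the closure $S_{N,m}^{\mathrm{cl}}$ is controlled via Cauchy--Schwarz in $z_*$ (using $\|K\|_\infty<\infty$) and then absorbed against the $\|\nabla_vG_{N,m}\|_{L^2}^2$ dissipation at the cost of a remainder proportional to $\|G_{N,m+1}\|_{L^2}^2$; the products and self-interaction are estimated pointwise. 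After Gr\"onwall, this yields for every $m$ an inequality
\begin{equation*}
\|G_{N,m}(t)\|_{L^2(\omega_{\beta(t)}^{\otimes m})} \,\le\, C_m e^{C_m t}\int_0^t\Big(\|G_{N,m+1}\|_{L^2} + \tfrac1N\|G_{N,m}\|_{L^2} + \sum_{\substack{k+\ell\ge m+1\\k,\ell\ge1}}\|G_{N,k}\|_{L^2}\|G_{N,\ell}\|_{L^2}\Big)\,ds.
\end{equation*}

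The a posteriori bootstrap then runs as follows: writing $\alpha_m$ for the best exponent such that $\|G_{N,m}\|\lesssim N^{-\alpha_m}$, the above inequality upgrades $\alpha_m$ to $\min\bigl(\alpha_{m+1},\ \min_{k+\ell\ge m+1,\,k,\ell\ge1}(\alpha_k+\alpha_\ell)\bigr)$ at each pass. Starting from the input $\alpha_m=m/2$ furnished by Theorem~\ref{th:main} and iterating $O(m)$ times, every exponent gains $1/2$ at each round, so after finitely many iterations one reaches $\alpha_m=m-1$, which is~\eqref{eq:Linftybnd}. The final prefactor $(Ce^{Ct})^{C^m}$ reflects the compounded cost of these $O(m)$ applications of Gr\"onwall together with the combinatorial explosion of the $m$-dependent constants at each level. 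The hard part is precisely the closure term $S_{N,m}^{\mathrm{cl}}$: because its coefficient is only $O(1)$, a single pass through the hierarchy cannot reach the optimal scaling; the iterative bootstrap resolves this by leveraging, at every step, the suboptimal bounds available simultaneously at all levels $\ge m+1$, which is exactly what Theorem~\ref{th:main} provides.
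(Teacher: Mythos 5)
Your overall architecture — suboptimal global starting estimates, then a bootstrap through the nonlinear BBGKY hierarchy for the Ursell functions $G_{N,m}$ — matches the paper's. The bootstrap step (your $\alpha_m$-update rule, the fact that the closure term $B_{N,m+1}$ is the bottleneck that forces roughly $m$ passes of gain $1/2$ each, the resulting super-exponential constant) is essentially what appears in the paper's proof, phrased there as the induction hypothesis
$B_{N,m}(t)\le(C_0e^{C_0t})^{3^\theta m}(m+\theta)^{2(m+\theta)}(N^{-\frac12(m+\theta)}+N^{1-m})$.

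However, there is a genuine gap at the start. You assert that ``the suboptimal correlation estimates of Theorem~\ref{th:main}... in the bounded setting hold globally in time,'' citing only that the hypotheses are satisfied globally. This does not follow: the conclusion of Theorem~\ref{th:main} is stated only on a short interval $[0,T_*]$, with $T_*$ produced by a Cauchy--Kovalevski-type argument on the linear hierarchy, and this restriction is independent of how long the hypotheses hold. The paper does not use Theorem~\ref{th:main} as its entry point; it has to re-prove a global version (Proposition~\ref{prop:HNm-re}) for the $K\in L^\infty$ case, and this is a nontrivial step. The crucial new input there is a global-in-time exponential a priori estimate on the weighted $L^2$ norms of the \emph{marginals} $F_{N,m}$ (and hence on the linear correlations $H_{N,m}$, via the definition~\eqref{eq:defin-HNm}), which is derived by a direct Gr\"onwall argument on the BBGKY hierarchy~\eqref{eq:BBGKY0} using $K\in L^\infty$. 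Only with that a priori bound can Lemma~\ref{lem:hier} be applied to extend the $N^{-m/2}$-scaling globally. Your argument provides no substitute for this step. Note also that, as the paper emphasizes, this starting estimate must be obtained from the \emph{linear} hierarchy for the $H_{N,m}$'s (and then converted to $G_{N,m}$ via Lemma~\ref{lem:rel-G-H}): the nonlinear hierarchy you propose to use cannot be closed without an input scaling, precisely because of the quadratic couplings.

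A secondary imprecision concerns your product term constraint $k,\ell\ge1$ and the corresponding update rule $\min_{k+\ell\ge m+1,\ k,\ell\ge1}(\alpha_k+\alpha_\ell)$. Since $G_{N,1}=F_{N,1}$ (cf.~\eqref{eq:correl}) has $\alpha_1=0$, the pair $(k,\ell)=(1,m)$ would give $\alpha_1+\alpha_m=\alpha_m$ and the update would stagnate. In the correctly centered hierarchy of Lemma~\ref{lem:hier-corr}, the $O(1)$-coefficient quadratic terms involve only blocks of size $\ge2$; the one remaining size-$1$ factor appears as the \emph{centered} quantity $F_{N,1}-f$ (the leading part of $G_{N,1}$ having been absorbed into the linearized drift inside $L_{N,m}$), and it is this centering that makes the bootstrap move. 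As written, your hierarchy does not reflect that cancellation.
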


\begin{rem}
From the above improved correlation estimates, the statement of Corollary~\ref{cor:Bogo}(ii) on the accuracy of the Bogolyubov correction can also be improved accordingly: instead of weak convergences, we can deduce optimal global error estimates $O(e^{Ct}N^{-1})$ in $L^2(\omega_{\beta(t)})$ for all~$t\ge0$. We skip the details for shortness.
\end{rem}

\subsection*{Notation}
\begin{enumerate}[---]
\item We denote by $C\ge1$ any constant that only depends on the space dimension $d$ and on controlled norms of the interaction kernel $K$ and of the mean-field solution $f$. We use the notation $\lesssim$ for $\le C\times$ up to such a multiplicative constant $C$. We add subscripts to $C,\lesssim$ to indicate dependence on other parameters.
\smallskip\item For $\beta>0$, we denote by $\omega_\beta(z)=e^{\beta|v|^2/2}$ the inverse Maxwellian weight for $z=(x,v)\in\Xd$.
\smallskip\item We use the short-hand notation $[m]=\{1,\ldots,m\}$, and we set $z_B=(z_{i_1},\ldots,z_{i_k})$ for an index subset $B=\{i_1,\ldots,i_k\}$.
\end{enumerate}

\section{Tools for global hierarchical estimates}
To solve hierarchies of differential inequalities, as those appearing in the analysis of the BBGKY hierarchy, standard methods are based on Cauchy-Kovalevski type arguments and lead to estimates that are restricted to short times (see e.g.~\cite[Section~1.11.2]{Golse-rev2}). The following result shows that such estimates can be extended globally in time whenever exponential a priori estimates are already available. This follows from similar computations in~\cite[Section~3]{PPS-19} and will be repeatedly used in the sequel.

\begin{lem}[Global hierarchical estimates]\label{lem:hier}
Given $T\in(0,\infty]$, let $\{a_n\}_{n\ge1}$ be a sequence of continuous maps $a_n:[0,T)\to\R^+$ satisfying the following hierarchy of differential inequalities for some parameters~$A,R\ge1$: for all $n\ge1$,
\begin{equation}\label{eq:ineq-an}
\left\{\begin{array}{ll}
\tfrac{d}{dt}a_n\,\le\,
n(a_n+a_{n+1})
+n^3R^{-2}(a_{n-1}+a_{n-2}),&\text{on $[0,T)$},\\[1mm]
a_n(0)\le n^{2n}(AR^{-1} )^n.&
\end{array}\right.
\end{equation}
Further assume the following a priori estimates for some $B\ge1$: for all $n\ge1$ and $t\in[0,T)$,
\begin{equation}\label{eq:apriori-an}
a_n(t)\le B^n.
\end{equation}
Then there is a constant $C\ge1$ (only depending on $B$, and not on~$T,A,R$) such that we have for all~$n\ge1$ and~$t\in[0,T)$,
\begin{equation}\label{eq:todo-an}
a_n(t)\le e^{Cnt}(Cn^2)^n(AR^{-1})^n.
\end{equation}
\end{lem}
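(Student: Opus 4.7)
My plan is to follow the strategy of~\cite{PPS-19}: iterate the Duhamel representation of~\eqref{eq:ineq-an} upward in $n$ a controlled number of steps, close the iteration at the top level via the a priori bound~\eqref{eq:apriori-an}, and then bootstrap the resulting short-time estimate to the whole interval~$[0,T)$.

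The first step is to use the integrating factor $e^{nt}$ in~\eqref{eq:ineq-an}, giving $a_n(t)\le e^{nt}a_n(0)+\int_0^t e^{n(t-s)}\bigl[n\,a_{n+1}(s)+n^3R^{-2}(a_{n-1}(s)+a_{n-2}(s))\bigr]ds$, and then to substitute this formula recursively for $a_{n+1}(s)$, $a_{n+2}(s_1)$, $\ldots$, up to $k$ times (with $k$ optimized later). The key calculation is the telescoping of nested exponentials, $n(t-s_0)+(n+1)(s_0-s_1)+\ldots+(n+j)s_{j-1}=nt+s_0+\ldots+s_{j-1}$, which yields, for $0\le j\le k$, contributions $I_j\le\binom{n+j-1}{j}\,t^j\,e^{(n+j)t}\,a_{n+j}(0)$ from the intermediate initial data, plus a residual $R_k\le\binom{n+k-1}{k}\,t^k\,e^{(n+k)t}\,\sup_{s\le t}a_{n+k+1}(s)$, plus lower-order contributions arising from the $a_{n-1},a_{n-2}$ couplings.

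Next, I would insert $a_{n+j}(0)\le(n+j)^{2(n+j)}(A/R)^{n+j}$ into each $I_j$ and $\sup_s a_{n+k+1}(s)\le B^{n+k+1}$ into $R_k$. The sum $\sum_j I_j$ is of geometric type in the effective ratio $\sim t(n+j)^2(A/R)$, which remains controlled up to $k\sim(R/(tA))^{1/2}$, and its total contribution is bounded by the target $e^{Cnt}(Cn^2)^n(A/R)^n$ for absolute $C$. By Stirling's estimate, $R_k\lesssim(Cte^tB)^k\,(CB)^n\,e^{nt}$, which for $t\le\tau(B)$ small (depending only on $B$) is absorbed by the target. The terms $a_{n-1},a_{n-2}$, carrying the small prefactor $n^3R^{-2}$, are controlled by a simultaneous induction on $n$: the induction hypothesis combined with the time integration produces a contribution bounded by $(nt/(CAR))$ times the target, absorbable for $C$ large. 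This establishes~\eqref{eq:todo-an} on the short interval $[0,\tau(B)]$, with $\tau$ independent of $A,R,T$.

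Finally, I would extend to all $t\in[0,T)$ by a bootstrap: since the a priori bound~\eqref{eq:apriori-an} is time-independent, the preceding analysis applies verbatim on each sub-interval $[t_k,t_k+\tau(B)]$ of a regular partition of $[0,T)$, and each restart contributes a multiplicative factor $e^{Cn\tau}$, which accumulates over $t/\tau$ steps to $e^{Cnt}$ in the final estimate. The main obstacle will be Step~2: balancing the combinatorial factors $\binom{n+j-1}{j}$, $(n+j)^{2(n+j)}$, and the time factors $t^j$ so that the final constant $C$ depends only on~$B$; the a priori bound is essential here, precisely because without it the upward iteration diverges at any positive time, whereas with it the residual $R_k$ can be truncated and controlled uniformly in~$(n,t,A,R)$.
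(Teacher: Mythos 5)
Your overall plan — iterating the Duhamel representation upward, closing the expansion at a high level via the a priori bound $a_m\le B^m$, and then bootstrapping a short-time estimate on intervals of length $\tau(B)$ — matches the paper's strategy in spirit, and the bootstrap step and the use of the a priori bound to truncate the iteration are correctly identified. However, there is a genuine gap in your treatment of the downward coupling, and it is precisely the delicate part of the lemma.

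You propose to substitute the Duhamel formula recursively only for the \emph{highest} index ($a_{n+1}(s)$, then $a_{n+2}(s_1)$, etc.), which generates downward source terms involving $a_{n+j-1}$ and $a_{n+j-2}$ at every intermediate level $j=0,\ldots,k$, and to control these by a ``simultaneous induction on $n$.'' But for $j\ge1$ the downward source $a_{n+j-1}$ has index $\ge n$, so an induction on $n$ covers only the two terms $a_{n-1},a_{n-2}$ coming from the very first Duhamel step; the downward sources generated at depth $j\ge1$ are simply not available by that induction hypothesis. Your estimate ``$(nt/(CAR))$ times the target'' accounts for exactly those two level-$n$ terms and ignores the rest. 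What would actually be needed is either a genuine fixed-point/absorption argument (conjecture the bound for all $m$ simultaneously and show the self-consistent inequality closes with a small prefactor after summing over $j$ — nontrivial, since the geometric ratio involves factors like $(n+j)^3 t A/R$ that are not small uniformly in $j$), or the route taken in the paper: include the downward moves \emph{inside} the Dyson-type expansion itself, summing over paths $j_1,\ldots,j_k\in\{1,-1,-2\}$ with the multiplicative weights $m_i=n_{i-1}$ or $n_{i-1}^3 R^{-2}$, truncating only when the running index reaches $R$, and then performing casework on the terminal index $n_k$ (whether $n_k\le n$, $n<n_k\le\sqrt{A^{-1}R}$, or $n_k>\sqrt{A^{-1}R}$). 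The first of these cases is exactly where the factor $(nR^{-1})^{n-n_k}$ accumulated from downward steps saves the estimate — this is the mechanism your sketch is missing. As written, your proof does not close.
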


\begin{proof}
We shall show that there is a universal constant $C\ge1$ such that
\begin{equation}\label{eq:todoiter}
a_n(t)\le (CBn^2)^n(AR^{-1} )^n,\qquad\text{for all $0\le t< (CB)^{-1}\wedge T$ and $n\ge1$},
\end{equation}
from which the conclusion~\eqref{eq:todo-an} indeed follows by a direct iteration.
In order to prove~\eqref{eq:todoiter}, we note that for $n\ge\sqrt R$ there is nothing to prove in view of the a priori estimates~\eqref{eq:apriori-an}: indeed,
\[a_n\le1\le n^{2n}R^{-n},\qquad\text{for $n\ge \sqrt R$}.\]
Hence, we only need to prove~\eqref{eq:todoiter} for $n<\sqrt R$.
In addition, without loss of generality, we can assume~$R\ge4$ (say).
In order to estimate $a_n$, we proceed by integrating in time the hierarchy of differential inequalities~\eqref{eq:ineq-an}, and by iterating the resulting integral inequalities, only stopping the iteration when we reach some $a_m$ with $m\ge R$ (not~$\sqrt R$ here!).
More precisely, this leads us to the following truncated Dyson-type expansion,
\begin{eqnarray*}
a_n(t)&\le&\sum_{k=0}^\infty\sum_{j_1,\ldots,j_k}1_{n_0,\ldots,n_k<R}\Big(\prod_{i=1}^km_i\Big)\,a_{n_k}(0)\\
&&\qquad\times\Big(\int_{0\le t_k\le\ldots\le t_1\le t}e^{n_0(t-t_1)}\ldots e^{n_{k-1}(t_{k-1}-t_k)}e^{n_kt_k}dt_1\ldots dt_k\Big)\\
&+&\sum_{k=0}^\infty\sum_{j_1,\ldots,j_k}1_{n_0,\ldots,n_{k-1}<R\le n_k}\Big(\prod_{i=1}^km_i\Big)\\
&&\qquad\times\Big(\int_{0\le t_k\le\ldots\le t_1\le t}e^{n_0(t-t_1)}\ldots e^{n_{k-1}(t_{k-1}-t_k)}a_{n_k}(t_k)\,dt_1\ldots dt_k\Big),
\end{eqnarray*}
where the sum over $j_1,\ldots,j_k$ runs over $\{1,-1,-2\}$, where we set for abbreviation $n_i:=n+\sum_{l=1}^ij_l$, and where we define $m_i=n_{i-1}$ if $j_i=1$ and $m_i=n_{i-1}^3R^{-2}$ if $j_i\in\{-1,-2\}$.
As $n_i\le n+k$ for $0\le i\le k$, we find
\[e^{n_0(t-t_1)}\ldots e^{n_{k-1}(t_{k-1}-t_k)}e^{n_kt_k}\,\le\, e^{(n+k)t}.\]
Inserting this bound into the above, evaluating the remaining time integrals, and using the a priori estimate~\eqref{eq:apriori-an} in the form $a_{n_k}\le B^{n_k}\le B^{n+k}$ (as $n_k\le n+k$), we obtain
\begin{eqnarray}
a_n(t)&\le&e^{nt}\sum_{k=0}^\infty \tfrac{t^ke^{kt}}{k!}\sum_{j_1,\ldots,j_k}1_{n_0,\ldots,n_k<R}\Big(\prod_{i=1}^km_i\Big)\,a_{n_k}(0)\nonumber\\
&+&e^{nt}B^n\sum_{k=0}^\infty \tfrac{(Bt)^ke^{kt}}{k!}\sum_{j_1,\ldots,j_k}1_{n_0,\ldots,n_{k-1}<R\le n_k}\Big(\prod_{i=1}^km_i\Big).\label{eq:estim-an-00}
\end{eqnarray}
We start by examining the second term, which is easier to control. Provided $n_0,\ldots,n_{k-1}<R$, the definition of $m_i$ yields $m_i< n_{i-1}\le n+k$ for all $i$, hence 
\begin{equation}\label{eq:estim-(n+2k)k}
\prod_{i=1}^km_i\,\le\,(n+k)^k\,=\,k^k(1+\tfrac{n}{k})^k\,\le\,k!C^{n+k}.
\end{equation}
For $n<\sqrt R$, recalling the choice $R\ge4$, the condition $n+k\ge n_k\ge R$ implies
\[k\ge R-n\ge \sqrt R(\sqrt R-1)\ge\tfrac12R.\]
Using this observation and~\eqref{eq:estim-(n+2k)k}, the second term in~\eqref{eq:estim-an-00} can be bounded as follows, provided $n<\sqrt R$ and~$CBte^{t}\le e^{-1}$,
\begin{multline}\label{eq:red-an-hier}
e^{nt}B^n\sum_{k=0}^\infty \tfrac{(Bt)^ke^{kt}}{k!}\sum_{j_1,\ldots,j_k}1_{n_0,\ldots,n_{k-1}<R\le n_k}\Big(\prod_{i=1}^km_i\Big)\\
\,\le\,e^{nt}(CB)^n\sum_{k=0}^\infty (CBte^t)^k1_{k\ge\frac12R}
\,\le\,e^{nt}(CB)^ne^{-\frac12R}\,\le\,e^{nt}(CBn)^nR^{-n},
\end{multline}
where in the last inequality we have used the bound $e^{-a}\le n!a^{-n}\le (Cna^{-1})^n$ for $a\ge0$.
It remains to estimate the first term in~\eqref{eq:estim-an-00}, for which we distinguish between three cases.

\medskip\noindent
{\bf Case~1:} $n_k\le n$.\\
In this case, we can find $i_0\in\{0,\ldots,k\}$ such that $n_{i_0}=n$ and $n_i\le n$ for all $i_0<i\le k$. Provided $n_0,\ldots,n_k<R$, recalling that $m_i< n_{i-1}\le n+k$ for all $i$, noting that $m_i\le n^3R^{-2}$ for $i_0<i\le k$ if~$j_i\in\{-1,-2\}$, and noting that there must be at least $\frac12(n-n_k)$ steps with $j_i\in\{-1,-2\}$ for $i_0<i\le k$, we can bound
\[\prod_{i=1}^{i_0}m_i\,\le\,(n+k)^{i_0},\qquad\prod_{i=i_0+1}^{k}m_i\,\le\,(n^3R^{-2})^{\frac12(n-n_k)}n^{k-i_0-\frac12(n-n_k)}\,=\,(nR^{-1})^{n-n_k}\,n^{k-i_0},\]
hence, using again $(n+k)^k\le k!C^{n+k}$, cf.~\eqref{eq:estim-(n+2k)k},
\[\prod_{i=1}^{k}m_i\,\le\,(n+k)^{k}(nR^{-1})^{n-n_k}\,\le\,k!C^{n+k}(nR^{-1})^{n-n_k}.\]
Let us now use this to estimate the first right-hand side term in~\eqref{eq:estim-an-00} in the case $n_k\le n$. Using the initial assumption in form of
\[a_{n_k}(0)\le (n_k)^{2n_k}(AR^{-1})^{n_k}\le n^nA^n(nR^{-1})^{n_k},\]
we get for $Cte^{t}\le e^{-1}$,
\begin{multline*}
e^{nt}\sum_{k=0}^\infty \tfrac{t^ke^{kt}}{k!}\sum_{j_1,\ldots,j_k}1_{n_0,\ldots,n_k<R}\,1_{n_k\le n}\Big(\prod_{i=1}^km_i\Big)\,a_{n_k}(0)\\[-2mm]
\,\le\,e^{nt}(Cn^2)^n(AR^{-1})^{n}\sum_{k=0}^\infty (Cte^{t})^k
\,\le\,e^{nt}(Cn^2)^n(AR^{-1})^{n}.
\end{multline*}

\noindent
{\bf Case~2:} $n<n_k\le \sqrt{A^{-1}R}$.\\
Using~\eqref{eq:estim-(n+2k)k} again, for the first right-hand side term in~\eqref{eq:estim-an-00} restricted to the present case, we find
\begin{multline}\label{eq:estim-rhs-nk>n}
e^{nt}\sum_{k=0}^\infty \tfrac{t^ke^{kt}}{k!}\sum_{j_1,\ldots,j_k}1_{n_0,\ldots,n_k<R}\,1_{n<n_k\le\sqrt{A^{-1}R}}\,\Big(\prod_{i=1}^km_i\Big)\,a_{n_k}(0)\\
\,\le\,C^ne^{nt}\sum_{k=0}^\infty (Cte^{t})^k\sum_{j_1,\ldots,j_k}1_{n<n_k\le\sqrt{A^{-1}R}}\,a_{n_k}(0).
\end{multline}
Using the initial assumption in form of
\[a_{n_k}(0)\,\le\,(n_k)^{2n_k}(AR^{-1})^{n_k}
\,=\,n^{2n}\big(\tfrac{n_k}{n}\big)^{2n}(n_k^2AR^{-1})^{n_k-n}(AR^{-1})^{n},\]
and noting that the bound $n_k\le n+k$ entails
\[\big(\tfrac{n_k}n\big)^{2n}\,\le\,(1+\tfrac kn)^{2n}\,\le\,e^{2k},\]
we obtain, for $n< n_k\le\sqrt{A^{-1}R}$,
\[a_{n_k}(0)\,\le\,C^kn^{2n}(AR^{-1})^{n}.\]
Inserting this into~\eqref{eq:estim-rhs-nk>n}, we thus get for $Cte^t\le e^{-1}$,
\begin{multline*}
e^{nt}\sum_{k=0}^\infty \tfrac{t^ke^{kt}}{k!}\sum_{j_1,\ldots,j_k}1_{n_0,\ldots,n_k<R}\,1_{n<n_k\le\sqrt{A^{-1}R}}\,\Big(\prod_{i=1}^km_i\Big)\,a_{n_k}(0)\\[-2mm]
\,\le\,e^{nt}(Cn^2)^n(AR^{-1})^n\sum_{k=0}^\infty (Cte^{t})^k\,\le\,e^{nt}(Cn^2)^n(AR^{-1})^n.
\end{multline*}

\medskip\noindent
{\bf Case~3:} $n_k>\sqrt{A^{-1}R}$.\\
Using~\eqref{eq:estim-(n+2k)k} again, as well as the a priori bounds~\eqref{eq:apriori-an} at $t=0$ in form of $a_{n_k}(0)\le B^{n_k}\le B^{n+k}$ (as~$n_k\le n+k$), we get
\begin{multline}
e^{nt}\sum_{k=0}^\infty \tfrac{t^ke^{kt}}{k!}\sum_{j_1,\ldots,j_k}1_{n_0,\ldots,n_k<R}\,1_{n_k>\sqrt{A^{-1}R}}\Big(\prod_{i=1}^km_i\Big)\,a_{n_k}(0)\\[-3mm]
\,\le\,e^{nt}(CB)^n\sum_{k=0}^\infty (CBte^{t})^k\sum_{j_1,\ldots,j_k}1_{n_k>\sqrt{A^{-1}R}}.
\end{multline}
In order to estimate the last sum, we distinguish between two further cases, depending on the size of~$n$. If $n\le\frac12\sqrt{A^{-1}R}$, the condition $n+k\ge n_k> \sqrt{A^{-1}R}$ implies $k> \frac12\sqrt{A^{-1}R}$, and thus for~$CBte^{t}\le e^{-1}$,
\begin{multline}
1_{n\le \frac12\sqrt{A^{-1}R}}\,e^{nt}\sum_{k=0}^\infty \tfrac{t^ke^{kt}}{k!} \sum_{j_1,\ldots,j_k} 1_{n_0,\ldots,n_k<R}\,1_{n_k> \sqrt{A^{-1}R}}\Big(\prod_{i=1}^km_i\Big)\,a_{n_k}(0)\\
\,\le\,e^{nt}(CB)^n\sum_{k>\frac12\sqrt{A^{-1}R}}(CBte^t)^k
\,\le\,e^{nt}(CB)^ne^{-\frac12\sqrt{A^{-1}R}}
\,\le\,e^{nt}(CBn^2)^n(AR^{-1})^n.
\end{multline}
If instead $n>\frac12\sqrt{A^{-1}R}$, we can bound for $CBte^t\le e^{-1}$,
\begin{multline}
1_{n>\frac12\sqrt{A^{-1}R}}\,e^{nt}\sum_{k=0}^\infty \tfrac{t^ke^{kt}}{k!}\sum_{j_1,\ldots,j_k}1_{n_0,\ldots,n_k<R}\,1_{n_k> \sqrt{A^{-1}R}}\Big(\prod_{i=1}^km_i\Big)\,a_{n_k}(0)\\
\,\le\,1_{n>\frac12\sqrt{A^{-1}R}}\,e^{nt}(CB)^n\sum_{k=0}^\infty (CBte^{t})^k
\,\le\,1_{n> \frac12\sqrt{A^{-1}R}}\,e^{nt}(CB)^n
\,\le\,e^{nt}(CBn^2)^n(AR^{-1})^{n}.
\end{multline}
Combining the above different cases to estimate the first right-hand side term in~\eqref{eq:estim-an-00}, and combining with the bound~\eqref{eq:red-an-hier} on the second term, the conclusion~\eqref{eq:todoiter} follows for $n<\sqrt R$.
\end{proof}

\section{New framework for correlation estimates}

\subsection{Linear correlations}
The key of our argument for correlation estimates is to start by considering the following {\it linear correlations}, which are obtained by linearizing the correlation functions~$\{G_{N,m}\}_{1\le m\le N}$ around the mean-field approximation $f^{\otimes N}$, where we recall that $f$ stands for the mean-field solution~\eqref{eq:VFP}. More precisely, we define
\begin{equation*}
H_{N,0}\,=\,1,\qquad
H_{N,1}\,=\,F_{N,1}-f,\qquad
H_{N,2}\,=\,F_{N,2}-F_{N,1}\otimes f-f\otimes F_{N,1}+f\otimes f,
\end{equation*}
and more generally, for all $0\le m\le N$,
\begin{equation}\label{eq:defin-HNm}
H_{N,m}\,:=\,\sum_{k=0}^m(-1)^{m-k}\sum_{\sigma\in P^m_k}F_{N,k}(z_\sigma)f^{\otimes m-k}(z_{[m]\setminus\sigma}),
\end{equation}
where $P^m_k$ stands for the set of all subsets of $[m]$ with $k$ elements.
These quantities allow to reconstruct marginals via the following {\it linear} cluster expansion (compare to~\eqref{eq:cluster-exp}),
\begin{equation}\label{eq:cluster-FNmlin}
F_{N,m}=\sum_{k=0}^m\sum_{\sigma\in P^m_k}H_{N,k}(z_\sigma)f^{\otimes m-k}(z_{[m]\setminus\sigma}),\qquad1\le m\le N.
\end{equation}

\begin{rem}
The above linear correlations are formally similar to the dual correlations studied in~\cite{BDJ-24}, and indeed we have the following duality relation: denoting by $\{C_{N,m}\}_m$ the dual correlations associated to a dual Liouville solution $\Phi_N$ as defined in~\cite[Section~2]{BDJ-24}, there holds
\[
\sum_{m=0}^N \binom{N}{m}\int_{\Xd^m} C_{N,m}(t)\,H_{n,m}(t)=\sum_{m=0}^N \binom{N}{m}\int_{\Xd^m} C_{N,m}(0)\,H_{n,m}(0).
\]
Also note that the definition of linear correlations yields for any $\varphi\in C^\infty_c(\Xd)$,
\[\int_{\Xd^m}\varphi^{\otimes m}H_{N,m}=\int_{\Xd^m}\Big(\varphi-\int_{\Xd}\varphi f\Big)^{\otimes m}F_{N,m},\]
thus drawing the link to moments of $\mu_N-f$ as studied e.g.\@ in~\cite{BDM_25}.
Finally, we can compare the above linear correlations to those considered close to equilibrium in~\cite{BGS-17,DSR-21,DJ-24}, except that linearization is taken here at the mean-field approximation $f^{\otimes N}$ away from equilibrium.
\end{rem}

\subsection{BBGKY hierarchies}
Recall the standard BBGKY hierarchy of equations satisfied by marginals $\{F_{N,m}\}_{1\le m\le N}$, which is easily obtained by integrating the Liouville equation~\eqref{eq:Liouville}: for $1\le m\le N$,
\begin{multline}\label{eq:BBGKY0}
\partial_tF_{N,m}+\sum_{i=1}^mv_i\cdot\nabla_{x_i}F_{N,m}-\sum_{i=1}^m\triangle_{v_i}F_{N,m}\\
=-\frac1N\sum_{i\ne j}^mK(x_i,x_j)\cdot\nabla_{v_i}F_{N,m}-\frac{N-m}N\sum_{i=1}^m\int_\Xd K(x_i,x_*)\cdot\nabla_{v_i}F_{N,m+1}(\cdot,z_*)\,dz_*,
\end{multline}
where we set $F_{N,m}\equiv0$ for $m>N$.
From this, we can deduce a corresponding hierarchy of equations for linear correlations $\{H_{N,m}\}_{1\le m\le N}$.
As we shall see, this hierarchy is much more tractable than the hierarchy for nonlinear correlations $\{G_{N,m}\}_{1\le m\le N}$ (see Lemma~\ref{lem:hier-corr} below): in particular, it will allow us to obtain estimates for merely $L^2$ interaction forces, which seems impossible from the hierarchy for nonlinear correlations.

\begin{lem}[Hierarchy for linear correlations]\label{lem:BBGKY-HNm}
The above-defined linear correlations $\{H_{N,m}\}_m$ satisfy for all $0\le m\le N$,
\begin{multline*}
\partial_tH_{N,m}+L_{N,m}H_{N,m}=
\frac{1}N\sum_{i\ne j}^mf(z_j)\int_\Xd K(x_i,x_*)\cdot\nabla_{v_i}H_{N,m}(z_{[m]\setminus\{j\}},z_*)\,dz_*\\
-\frac{N-m}N\sum_{i=1}^m(\nabla_{v} f)(z_i)\cdot\int_\Xd K(x_i,x_*)H_{N,m}(z_{[m]\setminus\{i\}},z_*)\,dz_*\\
-\frac1N\sum_{i\ne j}^mf(z_j)(\nabla_vf)(z_i)\cdot\Big(K(x_i,x_j)-(K\ast f)(x_i)\Big)H_{N,m-2}(z_{[m]\setminus\{i,j\}})\\
-\frac1N\sum_{i\ne j}^mf(z_j)\Big(K(x_i,x_j)-(K\ast f)(x_i)\Big)\cdot\nabla_{v_i}H_{N,m-1}(z_{[m]\setminus\{j\}})\\
-\frac1N\sum_{i\ne j}^m(\nabla_vf)(z_i)\cdot K(x_i,x_j)H_{N,m-1}(z_{[m]\setminus\{i\}})
+\frac mN\sum_{i=1}^m(\nabla_vf)(z_i)\cdot (K\ast f)(x_i) H_{N,m-1}(z_{[m]\setminus\{i\}})\\
+\frac{1}N\sum_{i\ne j}^mf(z_j)(\nabla f)(z_i)\cdot\int_\Xd K(x_i,x_*)H_{N,m-1}(z_{[m]\setminus\{i,j\}},z_*)\,dz_*\\
-\frac{N-m}N\sum_{i=1}^m\int_\Xd K(x_i,x_*)\cdot\nabla_{v_i}H_{N,m+1}(z_{[m]},z_*)\,dz_*,
\end{multline*}
where we set $H_{N,m}\equiv0$ for $m>N$ or $m<0$, and where we have defined for abbreviation
\begin{equation}\label{eq:def-LNm}
L_{N,m}:=\sum_{i=1}^mv_i\cdot\nabla_{x_i}-\sum_{i=1}^m\triangle_{v_i}
+\frac{N-m}N\sum_{i=1}^mK\ast f(x_i)\cdot\nabla_{v_i}
+\frac1N\sum_{i\ne j}^mK(x_i,x_j)\cdot\nabla_{v_i}.
\end{equation}
Moreover, chaotic initial data~\eqref{eq:chaotic-data} yield $H_{N,m}|_{t=0}=0$ for all $1\le m\le N$.
\end{lem}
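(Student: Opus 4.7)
The plan is to derive the equation for $H_{N,m}$ by direct computation. I would differentiate the defining identity
\[H_{N,m}=\sum_{k=0}^m(-1)^{m-k}\sum_{\sigma\in P^m_k}F_{N,k}(z_\sigma)\,f^{\otimes m-k}(z_{[m]\setminus\sigma})\]
in time, substitute the BBGKY hierarchy~\eqref{eq:BBGKY0} for $\partial_t F_{N,k}$ and the Vlasov-Fokker-Planck equation~\eqref{eq:VFP} for $\partial_t f$, and then reorganize the resulting expression back in terms of linear correlations via the inverse relation~\eqref{eq:cluster-FNmlin}. Since both the definition of $H_{N,m}$ and the linear cluster expansion are invertible linear transformations between the families $\{F_{N,k}\}_k$ and $\{H_{N,k}\}_k$, such a rearrangement is always possible; what the lemma asserts is the specific structure that emerges.

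First I would handle transport and diffusion. Each operator $v_i\cdot\nabla_{x_i}$ and $\triangle_{v_i}$ acts on a single variable, so the contributions coming from $\partial_t F_{N,k}$ (for $i\in\sigma$) and from $\partial_t f^{\otimes m-k}$ (for $i\in[m]\setminus\sigma$) combine additively and yield exactly $\sum_{i=1}^m(v_i\cdot\nabla_{x_i}-\triangle_{v_i})H_{N,m}$, which is the free-streaming part of $L_{N,m}$. Next I would regroup the interaction terms, which split into three families: the internal self-interaction $-\frac1N\sum_{i\ne j\in\sigma}K(x_i,x_j)\cdot\nabla_{v_i}F_{N,k}$; the closure term $-\frac{N-k}N\sum_{i\in\sigma}\int K(x_i,x_*)\cdot\nabla_{v_i}F_{N,k+1}$; and the Vlasov drift $-(K\ast f)(x_j)\cdot\nabla_{v_j}f(z_j)$ acting on each $f$-factor of $f^{\otimes m-k}$. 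Expanding $F_{N,k}$ and $F_{N,k+1}$ via~\eqref{eq:cluster-FNmlin}, each of these splits further depending on whether the partner variable $x_j$ (or the silent integration variable $x_*$) sits inside the current $H_{N,\ell}$-block or in the $f$-complement.

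The key algebraic mechanism is a centering trick: many contributions naturally combine so that the bare kernel $K(x_i,x_j)$ appears in the centered form $K(x_i,x_j)-(K\ast f)(x_i)$, with the $(K\ast f)(x_i)$ piece being absorbed into the mean-field drift $\frac{N-m}N\sum_{i\le m}(K\ast f)(x_i)\cdot\nabla_{v_i}$ and the self-interaction piece $\frac1N\sum_{i\ne j\le m}K(x_i,x_j)\cdot\nabla_{v_i}$ to complete $L_{N,m}H_{N,m}$ as defined in~\eqref{eq:def-LNm}. This reflects precisely the linearization around $f^{\otimes N}$: the full Liouville drift is replaced by its linearized analogue. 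Whatever does not absorb into $L_{N,m}H_{N,m}$ remains on the right-hand side and, after the cancellations are performed, produces exactly the eight source terms stated in the lemma, indexed by the level $H_{N,m+1}$, $H_{N,m}$, $H_{N,m-1}$, or $H_{N,m-2}$ of the linear correlation that appears.

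The main obstacle is the combinatorial bookkeeping: numerous cross-terms proliferate when the expansion~\eqref{eq:cluster-FNmlin} of $F_{N,k+1}$ is substituted into the closure term, and one needs a systematic way to index them. I would organize them by (i) the number of external $f$-factors involved in the interaction, and (ii) the position of the kernel argument relative to the current $H$-block, and then rely on M\"obius-type identities such as $\sum_{k=j}^m(-1)^{m-k}\binom{m-j}{k-j}=0$ for $j<m$ to enforce the cancellation of all unwanted contributions. Finally, the initial condition is immediate: for chaotic data, $F_{N,k}|_{t=0}=f_\circ^{\otimes k}$, so
\[H_{N,m}|_{t=0}\,=\,f_\circ^{\otimes m}\sum_{k=0}^m(-1)^{m-k}\binom{m}{k}\,=\,0,\qquad 1\le m\le N,\]
by the binomial formula.
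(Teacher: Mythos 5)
Your proposal is correct and follows essentially the same route as the paper's proof: differentiate the defining expansion of $H_{N,m}$, insert the BBGKY hierarchy for the $F_{N,k}$'s and the Vlasov--Fokker--Planck equation for $f$, re-expand the marginals via the linear cluster expansion~\eqref{eq:cluster-FNmlin}, and cancel the unwanted cross-terms using the identity $\sum_{k=j}^m(-1)^{m-k}\binom{m-j}{k-j}=0$ for $j<m$, which is exactly the combinatorial identity the paper invokes. The verification of the initial condition by the binomial formula also matches the paper's (implicit) argument.
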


\begin{proof}
Starting point is the BBGKY hierarchy~\eqref{eq:BBGKY0} for marginals $\{F_{N,m}\}_{1\le m\le N}$.
By definition of~$H_{N,m}$, the BBGKY equation for $F_{N,m}$ and the mean-field equation~\eqref{eq:VFP} for $f$ lead to
\begin{multline}\label{eq:pre-eqn-HNm}
\partial_tH_{N,m}+\sum_{i=1}^mv_i\cdot\nabla_{x_i}H_{N,m}
-\sum_{i=1}^m\triangle_{v_i}H_{N,m}\\
=-\frac1N\sum_{k=0}^m(-1)^{m-k}\sum_{\sigma\in P^m_k}f^{\otimes m-k}(z_{[m]\setminus\sigma})\sum_{i,j\in\sigma\atop i\ne j}K(x_i,x_j)\cdot\nabla_{v_i}F_{N,k}(z_\sigma)\\
-\sum_{k=0}^m(-1)^{m-k}\sum_{\sigma\in P^m_k}f^{\otimes m-k}(z_{[m]\setminus\sigma})\frac{N-k}N\sum_{i\in\sigma}\int_\Xd K(x_i,x_*)\cdot\nabla_{v_i}F_{N,k+1}(z_\sigma,z_*)\,dz_*\\
-\sum_{k=0}^m(-1)^{m-k}\sum_{\sigma\in P^m_k}\sum_{i\notin\sigma}F_{N,k}(z_\sigma)K\ast f(z_i)\cdot\nabla_{v_i}f^{\otimes m-k}(z_{[m]\setminus\sigma}).
\end{multline}
It remains to reformulate the right-hand side in terms of linear correlations. We start with the first right-hand side term.
Using the cluster expansion~\eqref{eq:cluster-FNmlin} to write marginals in terms of linear correlations, and reorganizing the sums, we get 
\begin{eqnarray*}
\lefteqn{-\frac1N\sum_{k=0}^m(-1)^{m-k}\sum_{\sigma\in P^m_k}f^{\otimes m-k}(z_{[m]\setminus\sigma})\sum_{i,j\in\sigma\atop i\ne j}K(x_i,x_j)\cdot\nabla_{v_i}F_{N,k}(z_\sigma)}\\
&=&-\frac1N\sum_{k=0}^m(-1)^{m-k}\sum_{\sigma\in P^m_k}\sum_{i,j\in\sigma\atop i\ne j}\sum_{l=0}^k\sum_{\tau\in P^\sigma_l}K(x_i,x_j)\cdot\nabla_{v_i}\Big(f^{\otimes m-l}(z_{[m]\setminus\tau})H_{N,l}(z_\tau)\Big)\\
&=&
-\frac1N\sum_{l=0}^m\sum_{\tau\in P^m_l}\sum_{i,j\in\tau\atop i\ne j}
f^{\otimes m-l}(z_{[m]\setminus\tau})K(x_i,x_j)\cdot\nabla_{v_i}H_{N,l}(z_\tau)
\sum_{k=l}^m(-1)^{m-k}\binom{m-l}{k-l}\\
&&-\frac1N\sum_{l=0}^m\sum_{\tau\in P^m_l}\sum_{i\in\tau}
\sum_{j\notin\tau}f^{\otimes m-l}(z_{[m]\setminus\tau})K(x_i,x_j)\cdot\nabla_{v_i}H_{N,l}(z_\tau)\sum_{k=l}^m(-1)^{m-k}\binom{m-l-1}{k-l-1}\\
&&-\frac1N\sum_{l=0}^m\sum_{\tau\in P^m_l}\sum_{i\notin\tau}\sum_{j\in\tau}H_{N,l}(z_\tau)K(x_i,x_j)\cdot\nabla_{v_i}f^{\otimes m-l}(z_{[m]\setminus\tau})\sum_{k=l}^m(-1)^{m-k}\binom{m-l-1}{k-l-1}\\
&&-\frac1N\sum_{l=0}^m\sum_{\tau\in P^m_l}\sum_{i,j\notin\tau\atop i\ne j}H_{N,l}(z_\tau)K(x_i,x_j)\cdot\nabla_{v_i}f^{\otimes m-l}(z_{[m]\setminus\tau})\sum_{k=l}^m(-1)^{m-k}\binom{m-l-2}{k-l-2},
\end{eqnarray*}
hence, using the combinatorial identity $\sum_{k=0}^n(-1)^{n-k}\binom{n}{k}=\mathds1_{n=0}$,
\begin{multline*}
-\frac1N\sum_{k=0}^m(-1)^{m-k}\sum_{\sigma\in P^m_k}f^{\otimes m-k}(z_{[m]\setminus\sigma})\sum_{i,j\in\sigma\atop i\ne j}K(x_i,x_j)\cdot\nabla_{v_i}F_{N,k}(z_\sigma)\\
\,=\,-\frac1N\sum_{i\ne j}^mK(x_i,x_j)\cdot\nabla_{v_i}H_{N,m}
-\frac1N\sum_{\tau\in P^m_{m-1}}\sum_{i\ne j}^mf(z_j)K(x_i,x_j)\cdot\nabla_{v_i}H_{N,m-1}(z_{[m]\setminus\{j\}})\\
-\frac1N\sum_{i\ne j}^mH_{N,m-1}(z_{[m]\setminus i})K(x_i,x_j)\cdot\nabla_{v}f(z_i)
-\frac1N\sum_{i\ne j}^mH_{N,l}(z_{[m]\setminus\{i,j\}})K(x_i,x_j)\cdot\nabla_{v}f(z_i)f(z_j).
\end{multline*}
Arguing similarly to rewrite the last two terms in~\eqref{eq:pre-eqn-HNm}, the conclusion follows. We skip the details for shortness.
\end{proof}

\subsection{Estimates on linear correlations}
A direct analysis of the above hierarchy leads to the following short-time optimal estimates on linear correlations. To deal with the case of an interaction kernel $K\in L^2$ deriving from a potential with exponential integrability, we take inspiration from the BBGKY hierarchical estimates in~\cite{BJS-22} using suitable time-dependent Gibbs-type weights. Note that particular care is needed here to ensure the existence of a fixed time interval on which linear correlation estimates hold to all orders: for this purpose, we shall appeal to Lemma~\ref{lem:hier}.

\begin{prop}\label{prop:HNm}$ $
In the setting of Theorem~\ref{th:main}, there is a time $T_*\in(0,T_0]$ and some $\beta_*>0$ such that we have for all $0\le m\le N$ and $t\in[0,T_*]$,
\[\Big(\int_{\Xd^m}|H_{N,m}(t)|^2\omega_{\beta_*}^{\otimes m}\Big)^\frac12\,\le\, (Cm^2)^mN^{-\frac m2}.\]
\end{prop}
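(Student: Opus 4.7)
The plan is to propagate weighted $L^2$ energy estimates on the hierarchy for the linear correlations from Lemma~\ref{lem:BBGKY-HNm}, reduce the problem to a hierarchy of differential inequalities matching the template of Lemma~\ref{lem:hier} with parameters $A=1$ and $R=\sqrt{N}$, and invoke that lemma after separately establishing a rough a priori bound. The chaotic assumption gives $H_{N,m}(0)=0$, so the initial side is trivial and the work lies in the dynamic estimates.

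Concretely, I would set $u_m(t):=\|H_{N,m}(t)\|_{L^2(\omega_{\beta(t)}^{\otimes m})}$ for a well-chosen decreasing profile $t\mapsto\beta(t)$ with $\beta(T_*)=\beta_*>0$, and test the equation from Lemma~\ref{lem:BBGKY-HNm} against $H_{N,m}\,\omega_{\beta(t)}^{\otimes m}$. Integration by parts in $v$ on the Laplacian part of $L_{N,m}$ produces a dissipative gradient contribution controlling $\sum_i\|\nabla_{v_i}H_{N,m}\|_{L^2(\omega_{\beta(t)}^{\otimes m})}^2$, up to a $|v|^2$ correction absorbed by $\dot\beta(t)\le 0$ in the spirit of~\cite{PPS-19,BJS-22}. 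The lowering terms on the right-hand side involve the singular kernel $K$ paired with lower-order linear correlations $H_{N,m-1},H_{N,m-2}$; these are controlled by Cauchy--Schwarz after inserting a Gibbs factor $e^{\pm\gamma W}$ and invoking the exponential integrability assumption $\sup_x\int(1+|K(x,\cdot)|^2)e^{\beta W(y)}\,dy<\infty$, while the mean-field $1/N$ prefactor together with the $m(m-1)$ pair count produce the coefficient $m^3N^{-1}$. The raising term $-\tfrac{N-m}{N}\sum_{i=1}^m\int K(x_i,x_*)\cdot\nabla_{v_i}H_{N,m+1}\,dz_*$ is absorbed by a small fraction of the gradient dissipation at level $m+1$ plus a boundary contribution of order $u_{m+1}$, the prefactor being $m$ (not $m^2$) thanks to the sum of $m$ single-index couplings. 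The outcome is the differential hierarchy
\[\tfrac{d}{dt}u_m\,\le\, Cm(u_m+u_{m+1})+Cm^3N^{-1}(u_{m-1}+u_{m-2}),\]
which matches~\eqref{eq:ineq-an} with $R=\sqrt{N}$ and $A=1$.

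In parallel I would derive a rough a priori bound $u_m(t)\le B^m$ on a common time interval $[0,T_*]\subset[0,T_0]$, uniform in $m$ and $N$. This follows from a weighted $L^2$ energy estimate on the Liouville equation~\eqref{eq:Liouville} using a Gibbs-modified Maxwellian weight $\omega_{\beta(t)}^{\otimes N}\exp(\gamma N^{-1}\sum_{i<j}W(x_i-x_j))$ along the lines of~\cite{BJS-22}, combined with the defining identity~\eqref{eq:defin-HNm} together with the $L^\infty$ bounds on $f$ and $W\ast f$ to transfer the control from $F_N$ to each $H_{N,m}$. With both ingredients in hand, Lemma~\ref{lem:hier} yields $u_m(t)\le e^{Cmt}(Cm^2)^m N^{-m/2}$, and shrinking $T_*$ if necessary absorbs the exponential into the polynomial prefactor, giving the announced bound. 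The main obstacle I anticipate is Step~1, specifically the control of the raising term $\int K\cdot\nabla_{v_i}H_{N,m+1}\,dz_*$: closing it with the correct structural constants relies on a subtle interplay between the velocity dissipation, the $L^2$-in-$K$ integrability, and the time-dependent Gibbs-modified Maxwellian weight, and is precisely what forces the assumption that $K$ derive from a potential with exponential integrability rather than being a bare $L^2$ kernel.
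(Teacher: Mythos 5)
Your overall strategy coincides with the paper's: test the hierarchy of Lemma~\ref{lem:BBGKY-HNm} against $H_{N,m}$ in a weighted $L^2$ space with time-dependent Gibbs-modified Maxwellian weights, control the singular lowering terms via the exponential integrability of $W$, reduce to a hierarchy of differential inequalities, and conclude via Lemma~\ref{lem:hier} after securing a rough a priori exponential bound. That is the right plan, and you correctly identify both the role of the potential structure and the forced use of Gibbs-type weights rather than plain Maxwellians.

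There is, however, a technical slip in the claimed form of the differential hierarchy. The energy method produces an inequality in the \emph{squared} quantities $A_{N,m}:=\|H_{N,m}\|_{L^2(\omega_{N,m})}^2$: the raising term $\int H_{N,m+1}\,\omega\, K\cdot(\nabla_{v_i}+\beta(t)v_i)H_{N,m}$ and the lowering terms must be closed by Cauchy--Schwarz followed by Young's inequality against the level-$m$ velocity dissipation, and Young necessarily returns $A_{N,m\pm j}$ (not the unsquared norms). The resulting hierarchy is
\[
\partial_t A_{N,m}\,\le\,CmA_{N,m}+CmA_{N,m+1}+C\tfrac{m^3}{N^2}(A_{N,m-1}+A_{N,m-2}),
\]
so Lemma~\ref{lem:hier} should be invoked with $R=N$, $A=1$ for the sequence $\{A_{N,m}\}$, after which a square root gives the stated bound. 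The linear inequality $\frac{d}{dt}u_m\le Cm(u_m+u_{m+1})+Cm^3N^{-1}(u_{m-1}+u_{m-2})$ with $R=\sqrt N$ that you write for $u_m=\|H_{N,m}\|_{L^2}$ is not derivable from the energy estimate (dividing the squared inequality by $2u_m$ leaves terms like $u_{m+1}^2/u_m$, and no monotonicity in $m$ is available). You also mis-locate the dissipation used to absorb the raising term: after integrating by parts it pairs with $\nabla_{v_i}H_{N,m}$ and is absorbed by the level-$m$ dissipation, not level $m+1$. Both slips are cosmetic and do not affect the final exponent.

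Where you genuinely diverge from the paper is the source of the a priori bound $A_{N,m}\le B^m$. You propose to obtain it from a weighted Gibbs $L^2$ estimate on the full Liouville equation and then transfer to each $H_{N,m}$ through~\eqref{eq:defin-HNm}; this is the route the paper takes in the bounded-force case (Proposition~\ref{prop:HNm-re}). Here, however, the paper derives the a priori bound directly from the same differential hierarchy by a generating-function (Cauchy--Kovalevskaya) argument: bounding $m^3/N^2\le m$ gives $\partial_t A_{N,m}\le Cm(A_{N,m-2}+A_{N,m-1}+A_{N,m}+A_{N,m+1})$, then $Z(t,r)=\sum_m r^m A_{N,m}$ satisfies $\partial_t Z\le C\partial_r Z$, yielding $A_{N,m}\le 2^m$ on a short time interval $T_*\sim C^{-1}$, which is also what fixes $T_*$. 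Both routes are legitimate; the paper's stays entirely within the hierarchy and produces the time restriction as a byproduct, whereas yours would require a separate well-posedness argument at the $N$-particle Liouville level (as in the cited \cite{BJS-22}) but works globally in time where available.
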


\begin{proof}
Following~\cite{BJS-22}, given $\beta>0$ to be suitably chosen later on, we consider the following time-dependent Gibbs-type weights, for all $1\le m\le N$ and $t\ge0$,
\begin{equation*}
\omega_{N,m}(t;z_1,\ldots,z_m)
\,:=\,\exp\bigg[\frac{\beta}{1+4\beta t}\bigg(\frac12\sum_{i=1}^m|v_i|^2
+\frac{N-m}N\sum_{i=1}^m(W\ast f)(t,x_i)
+\frac1{2N}\sum_{i\ne j}^mW(x_i-x_j)\bigg)\bigg].
\end{equation*}
These weights are adapted to the linear operators $L_{N,m}$'s driving the correlation dynamics in the hierarchy derived in Lemma~\ref{lem:BBGKY-HNm}: indeed, by multiple integrations by parts, we obtain the following inequality for the resulting weighted dissipation, for all $1\le m\le N$, $h_m\in C^\infty_c(\R^+\times\Xd^m)$, and~$t\ge0$,
\begin{multline}\label{eq:weight-dissip}
2\int_{\Xd^m} h_m(L_{N,m}h_m)\omega_{N,m}-\int_{\Xd^m}|h_m|^2\partial_t\omega_{N,m}\\
\ge
2\sum_{i=1}^m\int_{\Xd^m} |\nabla_{v_i}h_m|^2\omega_{N,m}
+\frac{\beta^2}{(1+4\beta t)^2}\sum_{i=1}^m\int_{\Xd^m}|v_i|^2|h_m|^2\omega_{N,m}\\
-\frac{m\beta}{1+4\beta t}\Big(d+\|\partial_t(W\ast f)\|_{L^\infty}+\frac{6\beta \|W_-\|_{L^\infty}}{1+4\beta t}\Big)\int_{\Xd^m}|h_m|^2\omega_{N,m}.
\end{multline}
Consider the corresponding weighted $L^2$ norms of linear correlations, for $1\le m\le N$,
\[A_{N,m}\,:=\,\int_{\Xd^m}|H_{N,m}|^2\omega_{N,m}.\]
Performing an energy estimate for the hierarchy derived in Lemma~\ref{lem:BBGKY-HNm}, using the above dissipation inequality~\eqref{eq:weight-dissip}, and integrating by parts, we get for all~$t\in[0,T_0]$,
\begin{multline*}
\partial_tA_{N,m}\,\le\,
-2\sum_{i=1}^m\int_{\Xd^m} |\nabla_{v_i}H_{N,m}|^2\omega_{N,m}
-\frac{\beta^2}{(1+4\beta t)^2}\sum_{i=1}^m\int_{\Xd^m}|v_i|^2|H_{N,m}|^2\omega_{N,m}\\
+\frac{m\beta}{1+4\beta t}\Big(d+\|\partial_t(W\ast f)\|_{L^\infty}+\frac{6\beta \|W_-\|_{L^\infty}}{1+4\beta t}\Big)A_{N,m}\\
+\frac{2}N\sum_{i\ne j}^m\int_{\Xd^{m+1}}f(z_j) H_{N,[m+1]\setminus\{j\}}\omega_{N,[m]}K(x_i-x_{m+1})\cdot\Big(\nabla_{v_i}+\frac{\beta}{1+4\beta t}v_i\Big)H_{N,[m]}\\
+2\frac{N-m}N\sum_{i=1}^m\int_{\Xd^{m+1}}f(z_i) H_{N,[m+1]\setminus\{i\}}\omega_{N,[m]}K(x_i-x_{m+1})\cdot \Big(\nabla_{v_i}+\frac{\beta}{1+4\beta t}v_i\Big)H_{N,[m]}\\
+\frac1N\sum_{i\ne j}^m\int_{\Xd^m}f(z_i)f(z_j)H_{N,[m]\setminus\{i,j\}}\omega_{N,[m]}\Big(K(x_i-x_j)-(K\ast f)(x_i)\Big)\cdot\Big(\nabla_{v_i}+\frac{\beta}{1+4\beta t}v_i\Big)H_{N,[m]}\\
+\frac1N\sum_{i\ne j}^m\int_{\Xd^m}f(z_j)H_{N,[m]\setminus\{j\}}\omega_{N,[m]}\Big(K(x_i-x_j)-(K\ast f)(x_i)\Big)\cdot\Big(\nabla_{v_i}+\frac{\beta}{1+4\beta t}v_i\Big)H_{N,[m]}\\
+\frac1N\sum_{i\ne j}^m\int_{\Xd^m}f(z_i)H_{N,[m]\setminus\{i\}}\omega_{N,[m]}K(x_i-x_j)\cdot\Big(\nabla_{v_i}+\frac{\beta}{1+4\beta t}v_i\Big)H_{N,[m]}\\
-\frac mN\sum_{i=1}^m\int_{\Xd^m}f(z_i)H_{N,[m]\setminus\{i\}}\omega_{N,[m]}(K\ast f)(x_i)\cdot\Big(\nabla_{v_i}+\frac{\beta}{1+4\beta t}v_i\Big)H_{N,[m]}\\
-\frac{1}N\sum_{i\ne j}^m\int_{\Xd^{m+1}}f(z_i)f(z_j)\omega_{N,[m]} H_{N,[m+1]\setminus\{i,j\}}K(x_i-x_{m+1})\cdot\Big(\nabla_{v_i}+\frac{\beta}{1+4\beta t}v_i\Big)H_{N,[m]}\\
+\frac{N-m}N\sum_{i=1}^m\int_{\Xd^{m+1}} H_{N,[m+1]}\omega_{N,[m]} K(x_i-x_{m+1})\cdot\Big(\nabla_{v_i}+\frac{\beta}{1+4\beta t}v_i\Big)H_{N,[m]},
\end{multline*}
with the short-hand notation
\[H_{N,A}:=H_{N,\sharp A}(z_A),\qquad \omega_{N,A}:=\omega_{N,\sharp A}(z_A).\]
Using Young's inequality to absorb factors involving $\nabla_vH_{N,m}$ or $vH_{N,m}$ into the dissipation terms, we obtain
\begin{multline*}
\partial_tA_{N,m}\,\lesssim\,
m\Big(1+\|\partial_t(W\ast f)\|_{L^\infty}+\|W_-\|_{L^\infty}\Big)A_{N,m}\\
+\frac{m^3}{N^2}\int_{\Xd^{m}}\Big|\int_{\Xd}H_{N,[m+1]\setminus\{1\}}K(x_2-x_{m+1})\,dz_{m+1}\Big|^2f(z_1)^2\omega_{N,[m]}\,dz_{[m]}\\
+m\int_{\Xd^{m}}\Big|\int_{\Xd}H_{N,[m+1]\setminus\{1\}}K(x_1-x_{m+1})\,dz_{m+1}\Big|^2f(z_1)^2\omega_{N,[m]}\,dz_{[m]}\\
+\frac{m^3}{N^2}\int_{\Xd^m}|H_{N,[m]\setminus\{1,2\}}|^2\Big(|K(x_1-x_2)|^2+|(K\ast f)(x_1)|^2\Big)f(z_1)^2f(z_2)^2\omega_{N,[m]}\,dz_{[m]}\\
+\frac{m^3}{N^2}\int_{\Xd^m}|H_{N,[m]\setminus\{2\}}|^2\Big(|K(x_1-x_2)|^2+|(K\ast f)(x_1)|^2\Big)f(z_2)^2\omega_{N,[m]}\,dz_{[m]}\\
+\frac{m^3}{N^2}\int_{\Xd^m}|H_{N,[m]\setminus\{1\}}|^2\Big(|K(x_1-x_2)|^2+|(K\ast f)(x_1)|^2\Big)f(z_1)^2\omega_{N,[m]}\,dz_{[m]}\\
+\frac{m^3}{N^2}\int_{\Xd^{m}}\Big|\int_{\Xd}H_{N,[m+1]\setminus\{1,2\}}K(x_1-x_{m+1})\,dz_{m+1}\Big|^2f(z_1)^2f(z_2)^2\omega_{N,[m]}\,dz_{[m]}\\
+m\int_{\Xd^{m}}\Big|\int_\Xd H_{N,[m+1]}K(x_i-x_{m+1})\,dz_{m+1}\Big|^2\omega_{N,[m]}dz_{[m]}.
\end{multline*}
In order to bound the right-hand side in terms of the $A_{N,m}$'s themselves, we need to add or remove variables in the weights. More precisely, using estimates such as
\begin{multline*}
\omega_{N,[m]}
\,\le\, \omega_{N,[m+1]\setminus\{1\}}\,e^{3\beta\|W_-\|_{L^\infty}}e^{\beta\|W\ast f\|_{L^\infty}}\\[-2mm]
\times\exp\Big[-\frac{\beta}{2(1+4\beta t)}|v_{m+1}|^2\Big]
\exp\Big[\frac\beta2|v_1|^2\Big]\exp\Big[\frac\beta{N}\sum_{i=2}^mW(x_1-x_i)\Big],
\end{multline*}
we are easily led to the following: for all $0\le m\le N$ and $t\in[0,T_0]$,
\begin{equation}\label{eq:ANm-diff}
\left\{\begin{array}{l}
\partial_tA_{N,m}\,\le\,
CmA_{N,m}
+CmA_{N,m+1}
+C\frac{m^3}{N^2}(A_{N,m-1}+A_{N,m-2}),\\
A_{N,m}|_{t=0}=\mathds1_{m=0},
\end{array}\right.
\end{equation}
where the constant $C$ only depends on $T_0,\beta,$ and on
\begin{gather*}
\|W_-\|_{L^\infty},\qquad\|K\|_{L^2},\qquad\sup_{y\in\T^d}\int_{\T^d} e^{2\beta W(x-y)}|K(x)|^2\,dx,\\[-2mm]
\|W\ast f\|_{L^\infty(0,T_0;W^{1,\infty})},\qquad \|\partial_t(W\ast f)\|_{L^{\infty}(0,T_0;L^{\infty})},\qquad\sup_{t\in[0,T_0],x\in\T^d}\int_{\R^d}|f(t;x,v)|^2e^{\frac\beta2|v|^2}dv.
\end{gather*}
For a suitable choice of $\beta=\beta_*>0$, these quantities are all ensured to be finite by assumption.
We turn to the analysis of the hierarchy~\eqref{eq:ANm-diff} of differential inequalities. First, bounding $\frac{m^3}{N^2}\le m$, we get for all $0\le m\le N$ and $t\in[0,T_0]$,
\begin{equation*}
\left\{\begin{array}{l}
\partial_tA_{N,m}\,\le\,
Cm(A_{N,m-2}+A_{N,m-1}+A_{N,m}+A_{N,m+1}),\\
A_{N,m}|_{t=0}=\mathds1_{m=0},
\end{array}\right.
\end{equation*}
In terms of the generating function
\[Z(t,r)=\sum_{m=0}^Nr^mA_{N,m}(t),\qquad t\in[0,T_0],\quad 0\le r<1,\]
we get
\begin{equation*}
\partial_tZ(t,r)
\,\le\, C\partial_rZ(t,r),\qquad Z(0,r)=1.
\end{equation*}
Solving this differential inequality yields for all $0\le r<1$ and $0\le t\le (C^{-1}r)\wedge T_0$,
\begin{equation*}
Z(t,r-Ct)
\le Z(0,r)\,=\,1,
\end{equation*}
which implies for all $0\le m\le N$ and $t\in[0,T_*]$, with $T_*:=(2C)^{-1}\wedge T_0$,
\[A_{N,m}(t)\,\le\, 2^m.\]
Next, taking advantage of this exponential a priori control, we can now go back to the differential inequality~\eqref{eq:ANm-diff} and apply the hierarchical estimate of Lemma~\ref{lem:hier}: we conclude for all $1\le m\le N$ and~$t\in[0,T_*]$,
\[A_{N,m}(t)\,\le\, (Cm^2)^mN^{-m}.\qedhere\]
\end{proof}

\subsection{Nonlinear correlations}
We turn to the study of the usual (nonlinear) correlation functions $\{G_{N,m}\}_{1\le m\le N}$ defined in~\eqref{eq:correl}.
We start by unraveling their link to linear correlations. In a nutshell, the following lemma shows that linear correlations satisfy the same cluster expansion in terms of nonlinear correlations as marginals do, cf.~\eqref{eq:cluster-exp}, up to replacing $G_{N,1}=F_{N,1}$ by $F_{N,1}-f$.

\begin{lem}\label{lem:rel-G-H}
For all $1\le m\le N$,
\[H_{N,m}=\sum_{\pi\vdash[m]}\prod_{B\in\pi}\tilde G_{N,\sharp B}(z_B).\]
where we have set $\tilde G_{N,m}:=G_{N,m}$ for $m>1$ and $\tilde G_{N,1}:=F_{N,1}-f$.
\end{lem}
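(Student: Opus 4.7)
The plan is to start from the right-hand side $R_m:=\sum_{\pi\vdash[m]}\prod_{B\in\pi}\tilde G_{N,\sharp B}(z_B)$ and to manipulate it into the explicit formula~\eqref{eq:defin-HNm} defining $H_{N,m}$. The argument is purely combinatorial, based on the standard cluster expansion~\eqref{eq:cluster-exp} applied to marginals in the form
\[F_{N,m}(z_1,\ldots,z_m)=\sum_{\pi\vdash[m]}\prod_{B\in\pi}G_{N,\sharp B}(z_B),\qquad1\le m\le N.\]
Indeed, by M\"obius inversion on the partition lattice, this cluster expansion at level $m$ follows directly from~\eqref{eq:correl}, without having to integrate out $N-m$ variables.

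First, I would isolate the role of the singleton blocks. In each partition $\pi\vdash[m]$, let $S(\pi)\subseteq[m]$ denote the set of indices lying in a singleton block of $\pi$. Using $\tilde G_{N,1}=G_{N,1}-f$ for each singleton contribution and expanding the resulting binomial product, every term in $R_m$ is parametrized by a subset $T\subseteq S(\pi)$ of \emph{marked} singletons (the ones for which we pick up a factor $-f$) and a partition of the remaining indices. The key observation is that summing over $\pi\vdash[m]$ together with such a $T$ can be re-indexed as: first choose $T\subseteq[m]$ arbitrary (which indices receive $-f$), then choose an arbitrary partition $\pi'\vdash[m]\setminus T$ of the remaining $m-|T|$ indices (with no size constraint). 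This gives
\[R_m=\sum_{k=0}^m(-1)^{m-k}\sum_{T\in P^m_{m-k}}f^{\otimes m-k}(z_T)\sum_{\pi'\vdash[m]\setminus T}\prod_{B\in\pi'}G_{N,\sharp B}(z_B).\]

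Next, I would apply the marginal-level cluster expansion recalled above to the inner sum over $\pi'$, which collapses it to $F_{N,k}(z_{[m]\setminus T})$. Renaming $\sigma:=[m]\setminus T\in P^m_k$, this yields exactly
\[R_m=\sum_{k=0}^m(-1)^{m-k}\sum_{\sigma\in P^m_k}F_{N,k}(z_\sigma)f^{\otimes m-k}(z_{[m]\setminus\sigma})\,=\,H_{N,m},\]
by the definition~\eqref{eq:defin-HNm}, which is the claim.

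The only point that requires a bit of care is the re-indexing step: one must check that summing over $(\pi,T)$ with $T\subseteq S(\pi)$ is the same as summing over all pairs $(T,\pi')$ with $T\subseteq[m]$ arbitrary and $\pi'\vdash[m]\setminus T$ arbitrary. This is a bijection because the singletons of $\pi$ lying outside $T$ are precisely the singletons of $\pi'$, so that $\pi$ is uniquely reconstructed from $(T,\pi')$ by adjoining the singleton blocks $\{i\}$ for $i\in T$ to $\pi'$. Once this is established, the chain of identities above goes through and closes the proof.
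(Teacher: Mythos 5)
Your proof is correct and is essentially the paper's argument run in reverse: the paper starts from the definition~\eqref{eq:defin-HNm} of $H_{N,m}$, inserts the cluster expansion $F_{N,k}(z_\sigma)=\sum_{\pi\vdash\sigma}\prod_{B\in\pi}G_{N,\sharp B}(z_B)$, and leaves the ``straightforward recombination'' to the reader, whereas you start from the right-hand side and make precisely that recombination explicit, via the bijection between pairs $(\pi,T)$ with $T$ a set of marked singletons and pairs $(T,\pi')$ with $\pi'\vdash[m]\setminus T$. Since both arguments rest on the same two ingredients (the definition of $H_{N,m}$ and the marginal-level cluster expansion, which indeed follows from~\eqref{eq:correl} by M\"obius inversion), your write-up matches the paper's proof in substance.
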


\begin{proof}
Recalling the definition of $H_{N,m}$ and inserting cluster expansions for marginals in terms of nonlinear correlations, cf.~\eqref{eq:cluster-exp}, we find
\begin{eqnarray*}
H_{N,m}&=&\sum_{k=0}^m(-1)^{m-k}\sum_{\sigma\in P^m_k}F_{N,k}(z_\sigma)f^{\otimes m-k}(z_{[m]\setminus\sigma})\\
&=&\sum_{k=0}^m(-1)^{m-k}\sum_{\sigma\in P^m_k}\sum_{\pi\vdash\sigma}\prod_{B\in\pi}G_{N,\sharp B}(z_B)f^{\otimes m-k}(z_{[m]\setminus\sigma}),
\end{eqnarray*}
and the conclusion follows after a straightforward recombination.
\end{proof}

From the above lemma, we find that Proposition~\ref{prop:HNm} implies the following estimates on nonlinear correlations, thus proving the first part~\eqref{eq:L2bnd} of Theorem~\ref{th:main}.

\begin{cor}\label{cor:estim-GNm-0}$ $
In the setting of Theorem~\ref{th:main}, there is a time $T_*\in(0,T_0]$ and some $\beta_*>0$ such that we have for all $2\le m\le N$ and $t\in[0,T_*]$,
\begin{eqnarray*}
\Big(\int_{\Xd}|(F_{N,1}-f)(t)|^2\omega_{\beta_*}\Big)^\frac12&\le&CN^{-\frac12},\\
\Big(\int_{\Xd^m}|G_{N,m}(t)|^2\omega_{\beta_*}^{\otimes m}\Big)^\frac12&\le&(Cm^2)^mN^{-\frac m2}.
\end{eqnarray*}
\end{cor}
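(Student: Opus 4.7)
The corollary follows by inverting the cluster-like relation in Lemma~\ref{lem:rel-G-H} and then plugging in the linear correlation bounds from Proposition~\ref{prop:HNm}.

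The first step is a direct M\"obius inversion. The identity $H_{N,m}=\sum_{\pi\vdash[m]}\prod_{B\in\pi}\tilde G_{N,\sharp B}(z_B)$ from Lemma~\ref{lem:rel-G-H} has exactly the same algebraic form as the cluster expansion~\eqref{eq:cluster-exp}, but with $F_{N,m}$ replaced by $H_{N,m}$ and $G_{N,k}$ by $\tilde G_{N,k}$. The same M\"obius inversion that produced~\eqref{eq:correl} therefore yields
\[
\tilde G_{N,m}(z_1,\ldots,z_m)\,=\,\sum_{\pi\vdash[m]}(\sharp\pi-1)!\,(-1)^{\sharp\pi-1}\prod_{B\in\pi}H_{N,\sharp B}(z_B),\qquad 1\le m\le N.
\]
For $m=1$ this reduces to $\tilde G_{N,1}=F_{N,1}-f=H_{N,1}$, and Proposition~\ref{prop:HNm} with $m=1$ immediately gives the first claimed bound $\|F_{N,1}-f\|_{L^2(\omega_{\beta_*})}\lesssim N^{-1/2}$.

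For $m\ge2$ the second step is to take the weighted $L^2$ norm term by term. Since the blocks of any partition $\pi\vdash[m]$ are disjoint and the weight factorizes as $\omega_{\beta_*}^{\otimes m}(z_1,\ldots,z_m)=\prod_{B\in\pi}\omega_{\beta_*}^{\otimes\sharp B}(z_B)$, the product $\prod_{B\in\pi}H_{N,\sharp B}(z_B)$ has a tensor structure and its $L^2(\omega_{\beta_*}^{\otimes m})$-norm factorizes as $\prod_{B\in\pi}\|H_{N,\sharp B}\|_{L^2(\omega_{\beta_*}^{\otimes\sharp B})}$. Applying Proposition~\ref{prop:HNm} to each factor, and using that $\sum_{B\in\pi}\sharp B=m$, we obtain
\[
\bigg\|\prod_{B\in\pi}H_{N,\sharp B}(z_B)\bigg\|_{L^2(\omega_{\beta_*}^{\otimes m})}
\,\le\,\prod_{B\in\pi}(C(\sharp B)^2)^{\sharp B}\,N^{-\sharp B/2}
\,\le\,(Cm^2)^m\,N^{-m/2}.
\]

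The third step is a crude combinatorial bound on the sum. Since the number of partitions of $[m]$ (the Bell number $B_m$) and the factor $(\sharp\pi-1)!$ depend only on $m$, summing the previous display over $\pi\vdash[m]$ and using the triangle inequality yields the desired estimate with a constant of the form $(Cm^2)^m$ after absorbing $m!\,B_m$ into the $C$-dependent factor.

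There is essentially no serious obstacle at this stage: the whole argument is algebraic-combinatorial, and all the analytic work has already been carried out in the proof of Proposition~\ref{prop:HNm}. The only mild care needed is to keep track of the $m$-dependence of the combinatorial prefactors so as to match the advertised $(Cm^2)^m$ growth, which follows from the bound $\prod_{B\in\pi}(\sharp B)^{2\sharp B}\le m^{2m}$ valid for every partition $\pi\vdash[m]$.
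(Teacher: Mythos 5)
Your strategy is essentially the paper's: everything rests on Lemma~\ref{lem:rel-G-H} plus Proposition~\ref{prop:HNm}, and the only difference is how the relation between $\{H_{N,k}\}$ and $\{\tilde G_{N,k}\}$ is inverted. The paper argues by induction on $m$, rewriting the lemma as $\tilde G_{N,n+1}=H_{N,n+1}-\sum_{\sharp\pi>1}\prod_{B\in\pi}\tilde G_{N,\sharp B}$ and estimating the block products with the induction hypothesis; you invert in closed form by M\"obius inversion on the partition lattice, which is legitimate here because the identity of Lemma~\ref{lem:rel-G-H} holds for every $m$ (exchangeability), and the tensorization of the weighted $L^2$ norm over disjoint blocks is correct. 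The treatment of $m=1$ via $\tilde G_{N,1}=H_{N,1}$ is the same as in the paper.

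The one step that does not hold as written is your Step~3. You cannot ``absorb $m!\,B_m$ into the $C$-dependent factor'': $C$ is $m$-independent, and $m!\,B_m$ grows super-exponentially (it is of order $m^{2m}$ up to exponential factors), so the bound you actually establish is $(Cm^4)^mN^{-m/2}$, not the stated $(Cm^2)^mN^{-m/2}$. The fix does not require changing the strategy, only replacing ``number of partitions times largest term'' by a direct bound on the weighted partition sum, exploiting that only the one-block partition reaches $m^{2m}$. For instance, writing the sum over partitions with $p$ blocks as $\frac1p$ times a sum over ordered compositions $(k_1,\ldots,k_p)$ of $m$ and using $k!\ge(k/e)^k$,
\begin{equation*}
\sum_{\pi\vdash[m]}(\sharp\pi-1)!\prod_{B\in\pi}\big(C(\sharp B)^2\big)^{\sharp B}
\,\le\,C^m\sum_{p\ge1}\frac1p\sum_{k_1+\cdots+k_p=m}\frac{m!}{k_1!\cdots k_p!}\prod_{i=1}^pk_i^{2k_i}
\,\le\,C^me^m\,m!\sum_{p\ge1}\frac1p\sum_{k_1+\cdots+k_p=m}\prod_{i=1}^pk_i^{k_i},
\end{equation*}
and since $\prod_ik_i^{k_i}\le m^{m-p+1}$ for a composition into $p$ parts while the number of such compositions is $\binom{m-1}{p-1}\le m^{p-1}/(p-1)!$, the right-hand side is at most $e\,(eCm^2)^m$. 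With this (or any comparably careful bookkeeping, as is also implicitly needed to close the paper's induction) your argument does deliver the advertised $(Cm^2)^mN^{-m/2}$, so the flaw is a quantitative slip in the combinatorics rather than a gap in the approach.
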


\begin{proof}
Using the same notation as in Lemma~\ref{lem:rel-G-H}, we need to show for all $1\le m\le N$ and $t\in[0,T_*]$,
\[\Big(\int_{\Xd^m}|\tilde G_{N,m}(t)|^2\omega_{\beta_*}^{\otimes m}\Big)^\frac12\,\le\,(Cm^2)^mN^{-\frac m2}.\]
We argue by induction. For $m=1$, we have $\tilde G_{N,1}=F_{N,1}-f=H_{N,1}$, so the conclusion already follows from Proposition~\ref{prop:HNm}. Next, we assume that for some $n\ge1$ the result is already known to hold for all $1\le m\le n$.
Appealing to the previous lemma in form of
\[\tilde G_{N,n+1}=H_{N,n+1}-\sum_{\pi\vdash[n+1]\atop\sharp\pi>1}\prod_{B\in\pi}\tilde G_{N,\sharp B}(z_B),\]
we can estimate
\begin{equation*}
\Big(\int_{\Xd^{n+1}} |\tilde G_{N,n+1}|^2\omega_{\beta_*}^{\otimes n+1}\Big)^\frac12
\,\le\,\Big(\int_{\Xd^{n+1}} |H_{N,n+1}|^2 \omega_{\beta_*}^{\otimes n+1}\Big)^\frac12
+\sum_{\pi\vdash[n+1]\atop\sharp\pi>1}\prod_{B\in\pi}\Big(\int_{\Xd^{\sharp B}} |\tilde G_{N,\sharp B}|^2\omega_{\beta_*}^{\otimes \sharp B}\Big)^\frac12.
\end{equation*}
and the conclusion follows from Proposition~\ref{prop:HNm} for $H_{N,n+1}$ and the induction assumption.
\end{proof}

\subsection{Weak limit of subcritically rescaled correlations}
In this section, we turn to the proof of the second part~\eqref{eq:wL2bnd} of Theorem~\ref{th:main}, that is, for all $m\ge3$,
\[N^{\frac m2}G_{N,m}\,\overset*\rightharpoonup\,0,\qquad\text{in $L^\infty(0,T_*;L^2(\omega_{\beta_*}^{\otimes m}))$}.\]
For that purpose, we examine the hierarchy of equations satisfied by nonlinear correlations, and we pass to the limit in this hierarchy. We emphasize that this is only possible because we already have the subcritical bounds~\eqref{eq:L2bnd} on the $G_{N,m}$'s. In that regard, the detour through the $H_{N,m}$'s and their hierarchy is key to our approach.

Before we formulate the hierarchy of equations satisfied by nonlinear correlations, we start with some useful notation.

\begin{defin}
Consider a collection $\{h_m\}_{1\le m\le N}$ of functions $h_m:\Xd^m\to\R$ such that for all $m$ the function $h_m$ is symmetric in its $m$ entries (such as $\{G_{N,m}\}_{1\le m\le N}$).
\begin{enumerate}[---]
\item For $P\subset[ m]$ with $P\ne\varnothing$, we define $h_P:=h_{\sharp P}(z_P)$, and for $P=\varnothing$ we set $h_\varnothing:=0$ (as $h_m$ is undefined for $m<1$).
For $P\subset[ m]$ with $P\ne[ N]$, we define $h_{P\cup\{*\}}:=h_{\sharp P+1}(z_P,z_*)$, and for $P=[ N]$ we set $h_{[ N]\cup\{*\}}=0$ (as $h_m$ is undefined for $m>N$).
\smallskip\item For $P\subset[ m]$ and $k,\ell\in P$, we define
\[S_{k,\ell}h_P\,:=\,-K(x_k - x_\ell) \cdot \nabla_{v_k} h_P.\]
\item For $P\subset[ m]$ and $k\in P$, we define
\[H_kh_{P\cup\{*\}}\,:=\,-\int_{\Xd}K(x_k-x_*)\cdot\nabla_{v_k}h_{P\cup\{*\}}\,dz_*.\]
\end{enumerate}
\end{defin}

We may now formulate the BBGKY hierarchy for nonlinear correlations. The derivation follows similarly as in the proof of Lemma~\ref{lem:BBGKY-HNm}, and we refer to~\cite[Section~4]{Hess_Childs_2023} for the corresponding derivation in case of the overdamped dynamics; we skip the details for shortness. For convenience, we write $A-B=A\setminus B$ for set difference, with for instance $A-B-C=A\setminus(B\cup C)$ and $A\cup B-C=(A\cup B)\setminus C$.
Due to terms involving $S_{k,\ell}$, we emphasize that this hierarchy cannot be used to obtain estimates unless~$K\in L^\infty$; see~Section~\ref{sec:Linfty}.

\begin{lem}[Hierarchy for nonlinear correlations]\label{lem:hier-corr}
For fixed $N$, correlation functions $\{G_{N,m}\}_{1 \le m\le N}$ satisfy the following hierarchy of equations: for all $1\le m\le N$,
\begin{align*}
\partial_t G_{N,m}+&\, L_{N,m} G_{N,m}  = -\frac{N-m}N\sum_{i=1}^m(\nabla_{v}f)(z_i)\cdot\int_{\Xd}K(x_i-x_*)G_{N,m}(z_{[m]\setminus\{i\}},z_*)\,d z_*\\
&+\frac{N-m}{N} \sum_{k=1}^m H_k G_{N,[ m ] \cup \{\ast\}}
-  \sum_{k=1}^m \sum_{A \subset [ m ] - \{k\}} \frac{m-1-\sharp A}{N} H_k\big(G_{N,A \cup \{k, \ast\}} G_{N,[ m ] - \{k\} - A} \big)\\
&+  \frac{N-m}{N} \sum_{k=1}^m \sum_{A \subsetneq [ m ] - \{k\}\atop A\ne\varnothing} H_k\big( G_{N,A \cup \{k\}} G_{N,[ m ] \cup \{\ast\} - A - \{k\}} \big) \\
&-  \sum_{k=1}^m \sum_{A \subset [ m] - \{k\}} \sum_{B\subset [ m] - \{k\} - A} \frac{m-1-\sharp A-\sharp B}{N} H_k\big( G_{N,A \cup \{k\}} G_{N,B \cup \{*\}} G_{N,[ m] - A-B - \{k\}} \big) \\
&+ \frac{1}{N} \sum_{k \ne \ell}^m \sum_{A \subset [ m] - \{k , \ell \}} S_{k, \ell}\big( G_{N,A \cup \{k\}} G_{N,[ m] - A - \{k\}}\big) \\
&+ \frac{N-m}{N} \sum_{k=1}^m H_k \Big( (F^{N,\{*\}}-f(z_*))\, G_{N,[ m ]} + (F^{N,\{k\}}-f(z_k))\, G_{N, [ m ] \cup \{*\}-\{k\}} \Big),
\end{align*}
where we recall that $L_{N,m}$ is defined in~\eqref{eq:def-LNm}.
\end{lem}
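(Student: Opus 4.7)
The plan is to derive this hierarchy directly from the BBGKY hierarchy~\eqref{eq:BBGKY0} for the marginals $\{F_{N,k}\}_{k\le N}$, by inserting the Möbius-type definition~\eqref{eq:correl} of $G_{N,m}$ and using the mean-field equation~\eqref{eq:VFP} for $f$ to account for the $K\ast f$ drifts hidden in $L_{N,m}$. Concretely, I would first differentiate~\eqref{eq:correl} in time and substitute~\eqref{eq:BBGKY0} for each $\partial_t F_{N,\sharp B}(z_B)$. This produces three families of terms: (i) transport and velocity-diffusion pieces $\sum_{i\in B}v_i\cdot\nabla_{x_i}$ and $\sum_{i\in B}\triangle_{v_i}$; (ii) internal two-body interactions $\frac{1}{N}\sum_{i\ne j\in B}K(x_i,x_j)\cdot\nabla_{v_i}F_{N,\sharp B}$; and (iii) collisional couplings $\frac{N-\sharp B}{N}\sum_{i\in B}\int K(x_i,x_*)\cdot\nabla_{v_i}F_{N,\sharp B+1}(z_B,z_*)\,dz_*$ to an additional particle $*$.

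Next I would re-express every remaining marginal in the resulting expression back in terms of correlations through the cluster expansion~\eqref{eq:cluster-exp}, so that the right-hand side becomes polynomial in the $G_{N,k}$'s, with $f$ entering only through the mean-field drift $K\ast f$ contained in $L_{N,m}$. The bulk of the argument is then combinatorial: for each fixed choice of distinguished indices (a singleton $k\in[m]$ or a pair $k\ne\ell\in[m]$, and possibly the extra variable $*$), one has to regroup all nested partitions according to which block contains each distinguished index. As in the proof of Lemma~\ref{lem:BBGKY-HNm}, the inner alternating sums over partitions of the subsets not touching any distinguished index collapse by the identity $\sum_{k=0}^n(-1)^{n-k}\binom{n}{k}=\mathds1_{n=0}$, so that only a small set of configurations survives. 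The transport, diffusion, internal two-body interaction, and the two $K\ast f$ drifts (with coefficients $\frac{N-m}{N}$ and $\frac{m}{N}$) combine on the left-hand side to yield exactly $L_{N,m}G_{N,m}$, while the first right-hand side term absorbs the discrepancy $(\nabla_vf)(z_i)\cdot\int K(x_i,x_*)G_{N,m}(\ldots,z_*)\,dz_*$ that arises when the extra particle $*$ joins the block containing $k$.

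The remaining surviving configurations correspond exactly to the seven families listed in the statement, indexed by how the variables $k,\ell,*$ and the auxiliary subsets $A,B\subset[m]$ are distributed among the blocks of the outer partition. In particular, $S_{k,\ell}(G_{N,A\cup\{k\}}G_{N,[m]-A-\{k\}})$ comes from the internal interactions~(ii) when $k$ and $\ell$ end up in different blocks, while the four $H_k$-type terms arise from~(iii) according to whether $*$ clusters alone, together with $k$, together with a subset $A$ not containing $k$, or simultaneously with $k,A$ and a further set $B$. The last line, involving $(F^{N,\{*\}}-f(z_*))$ and $(F^{N,\{k\}}-f(z_k))$, is produced by the ``centering'' step needed to replace a bare $F_{N,1}$ factor by $f$ plus a remainder, so that the $(K\ast f)\cdot\nabla_v$ pieces that reconstruct $L_{N,m}$ on the left-hand side match up with the correct $\frac{N-m}{N}$ prefactor. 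The main obstacle will be precisely this combinatorial bookkeeping: matching the signs and multinomial weights coming from~\eqref{eq:correl} with those coming from~\eqref{eq:cluster-exp}, and verifying that every configuration carrying at least two blocks without distinguished indices is annihilated by the alternating-sum identity. I would follow the strategy of~\cite[Section~4]{Hess_Childs_2023}, where the analogous computation is carried out in detail for the overdamped hierarchy, and simply track the extra $v_i\cdot\nabla_{x_i}$ and $\triangle_{v_i}$ terms, which commute trivially with the cluster-expansion manipulations.
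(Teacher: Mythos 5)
Your plan coincides with the paper's own treatment: the paper gives no detailed computation either, stating only that the derivation parallels that of Lemma~\ref{lem:BBGKY-HNm} (differentiate the definition~\eqref{eq:correl}, insert the BBGKY hierarchy~\eqref{eq:BBGKY0}, re-expand the marginals via the cluster expansion~\eqref{eq:cluster-exp}, and let the alternating partition sums collapse) and referring to~\cite[Section~4]{Hess_Childs_2023} for the detailed overdamped bookkeeping, which is exactly what you propose. One small correction: the mean-field equation~\eqref{eq:VFP} is not actually needed, since $f$ does not enter the definition of $G_{N,m}$; the $(K\ast f)\cdot\nabla_v$ drift in $L_{N,m}$, the term with $(\nabla_v f)(z_i)$, and the last line of the hierarchy all arise purely from the algebraic centering $F_{N,1}=f+(F_{N,1}-f)$ applied to the singleton blocks, as you in fact correctly describe in your ``centering'' step.
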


With this hierarchy at hand, we can now conclude the proof of~\eqref{eq:wL2bnd}.
By Corollary~\ref{cor:estim-GNm-0}, we obtain the following by weak compactness: up to a subsequence as $N\uparrow\infty$, for all $m\ge2$,
\begin{equation}\label{eq:conv-GNm}
N^{\frac m2}G_{N,m}\overset*\rightharpoonup \bar G_m,\qquad\text{in $L^\infty(0,T_*;L^2(\omega_{\beta_*}^{\otimes m}))$},
\end{equation}
for some limit $\{\bar G_m\}_{m\ge2}$. Also note that Corollary~\ref{cor:estim-GNm-0} yields, for $m=1$,
\[F_{N,1}\to f,\qquad\text{in $L^\infty(0,T_*;L^2(\omega_{\beta_*}))$}.\]
Now passing to the weak limit in the hierarchy of Lemma~\ref{lem:hier-corr}, which is allowed for $K\in L^2$, we find
\begin{multline}\label{eq:conv-lim-GNm}
\partial_t \bar G_{m}
+\sum_{i=1}^mv_i\cdot\nabla_{x_i}\bar G_{m}
-\sum_{i=1}^m\triangle_{v_i}\bar G_{m}
+\sum_{i=1}^m (K\ast f)(z_i)\cdot\nabla_{v_i}\bar G_{m}\\
+\sum_{i=1}^m(\nabla_{v}f)(z_i)\cdot\int_{\Xd}K(x_i-x_*)\bar G_{m}(z_{[m]\setminus\{i\}},z_*)\,d z_*\\
\,=\,
-\mathds1_{m=2}\sum_{k\ne\ell}^2\Big(K(x_k-x_\ell)-(K\ast f)(x_k)\Big)\cdot(\nabla_vf)(z_k)f(z_\ell),
\end{multline}
with $\bar G^m|_{t=0}=0$ for $m\ge2$.
This implies $\bar G^m\equiv0$ for all $m\ge3$ and concludes the proof of~\eqref{eq:wL2bnd}.

\begin{rem}
For comparison, considering weak limits of rescaled linear correlations,
\[N^\frac m2H_{N,m}\overset*\rightharpoonup \bar H_m,\qquad\text{in $L^\infty(0,T_*;L^2(\omega_{\beta_*}^{\otimes m}))$},\]
and passing to the limit in the corresponding linear hierarchy in Lemma~\ref{lem:BBGKY-HNm}, we find for all $m\ge1$,
\begin{multline*}
\partial_t\bar H_{m}
+\sum_{i=1}^mv_i\cdot\nabla_{x_i}\bar H_{m}-\sum_{i=1}^m\triangle_{v_i}\bar H_{m}+\sum_{i=1}^m(K\ast f)(x_i)\cdot\nabla_{v_i}\bar H_{m}\\
+\sum_{i=1}^m(\nabla_{v} f)(z_i)\cdot\int_\Xd K(x_i-x_*)\bar H_{m}(z_{[m]\setminus\{i\}},z_*)\,dz_*\\
=-\sum_{i\ne j}^mf(z_j)(\nabla_vf)(z_i)\cdot\Big(K(x_i-x_j)-(K\ast f)(x_i)\Big)\bar H_{m-2}(z_{[m]\setminus\{i,j\}}),
\end{multline*}
with initially $\bar H_m|_{t=0}=0$ for all $m\ge1$. This immediately implies $\bar H_m\equiv0$ for $m$ odd, while terms of even order satisfy a nontrivial hierarchy. In fact, the latter implies relations of the form $\bar H_4=3\,\Sym(\bar H_2\otimes\bar H_2)$, and so on, which are equivalent to the vanishing of limit nonlinear correlations~$\{\bar G_m\}_m$.
\end{rem}

\section{Bogolyubov correction}
This section is devoted to the proof of Corollary~\ref{cor:Bogo}, which we split into four steps.

\medskip\noindent
{\bf Step~1:} Proof that for all $1\le m\le N$ and $t\in[0,T_*]$,
\begin{equation}\label{eq:prop-chaos}
\|F_{N,m}-f^{\otimes m}\|_{L^2(\omega_{\beta_*}^{\otimes m})}\,\le\,C_mN^{-1}.
\end{equation}
Note that for $m=1$ a suboptimal rate $O(N^{-1/2})$ was obtained in Corollary~\ref{cor:estim-GNm-0}, which we now improve.
The starting point is the BBGKY hierarchy~\eqref{eq:BBGKY0} for marginals. Comparing it to the mean-field equation for $f$, we find
\begin{multline}\label{eq:FNm-fm}
\partial_t(F_{N,m}-f^{\otimes m})+\sum_{i=1}^mv_i\cdot\nabla_{x_i}(F_{N,m}-f^{\otimes m})
-\sum_{i=1}^m\triangle_{v_i}(F_{N,m}-f^{\otimes m})\\
+\frac1N\sum_{i\ne j}^mK(x_i-x_j)\cdot\nabla_{v_i}(F_{N,m}-f^{\otimes m})\\
=-\frac{N-m}N\sum_{i=1}^m\int_\Xd K(x_i-x_*)\cdot\nabla_{v_i}(F_{N,m+1}-f^{\otimes m+1})(\cdot,z_*)\,dz_*\\
-\frac1N\sum_{i, j=1}^m\big(K(x_i-x_j)\mathds1_{i\ne j}-K\ast f(x_i)\big)\cdot\nabla_{v_i}f^{\otimes m}.
\end{multline}
Similarly as in the proof of Proposition~\ref{prop:HNm}, we perform $L^2$ estimates with suitable time-dependent weights. More precisely, we now define for all $1\le m\le N$ and $t\ge0$,
\[\omega_{N,m}'(t;z_{[m]})\,=\,\exp\bigg[\frac\beta{1+4\beta t}\bigg(\frac12\sum_{i=1}^m|v_i|^2+\frac1{2N}\sum_{i\ne j}^mW(x_i-x_j)\bigg)\bigg].\]
Performing a weighted energy estimate for the above hierarchy~\eqref{eq:FNm-fm}, with these weights, we find
\begin{multline*}
\partial_t\int_{\Xd^m}|F_{N,m}-f^{\otimes m}|^2\omega_{N,m}'\\
\,\le\,
-2\int_{\Xd^m}\sum_{i=1}^m|\nabla_{v_i}(F_{N,m}-f^{\otimes m})|^2\omega_{N,m}'
-\frac{\beta^2}{(1+4\beta t)^2}\sum_{i=1}^m\int_{\Xd^m}|v_i|^2|F_{N,m}-f^{\otimes m}|^2\omega_{N,m}'\\
+\frac{m\beta}{1+4\beta t}\Big(d+\frac{2\beta\|W_-\|_{L^\infty}}{1+4\beta t)}\Big)\int_{\Xd^m}|F_{N,m}-f^{\otimes m}|^2\omega_{N,m}'\\
+2\frac{N-m}N\sum_{i=1}^m\int_{\Xd^{m+1}}\omega_{N,m}'(z_{[m]})(F_{N,m+1}-f^{\otimes m+1})(z_{[m+1]})K(x_i-x_{m+1})\\
\hspace{5cm}\cdot\Big(\nabla_{v_i}+\frac{\beta}{1+4\beta t}v_i\Big)(F_{N,m}-f^{\otimes m})(z_{[m]})\,dz_{[m+1]} \\
-\frac2N\sum_{i, j=1}^m\int_{\Xd^m}\omega_{N,m}'f^{\otimes m}\big(K(x_i-x_j)\mathds1_{i\ne j}-K\ast f(x_i)\big)\cdot\Big(\nabla_{v_i}+\frac\beta{1+4\beta t}v_i\Big)(F_{N,m}-f^{\otimes m}).
\end{multline*}
By Young's inequality, using the dissipation, we deduce
\begin{multline*}
\partial_t\int_{\Xd^m}|F_{N,m}-f^{\otimes m}|^2\omega_{N,m}'
\,\lesssim\,
m\int_{\Xd^m}|F_{N,m}-f^{\otimes m}|^2\omega_{N,m}'\\
+m\int_{\Xd^{m}}\Big|\int_\Xd (F_{N,m+1}-f^{\otimes m+1})(z_{[m+1]})\,K(x_1-x_{m+1})\,dz_{m+1}\Big|^2\omega_{N,m}'(z_{[m]})\,dz_{[m]}\\
+\frac{m^3}{N^2}\int_{\Xd^m}\Big(|K(x_1-x_2)|^2+|K\ast f(x_1)|^2\Big)|f^{\otimes m}|^2\omega_{N,m}'.
\end{multline*}
Changing the weight in the second term,
\[\omega_{N,m}'\,\lesssim\,\omega_{N,m+1}'\exp\Big[-\frac\beta{2(1+4\beta t)}|v_{m+1}|^2\Big],\]
and recalling the assumptions on $K,f$, for a suitable choice of $\beta=\beta_*>0$, we are led to
\begin{multline*}
\partial_t\int_{\Xd^m}|F_{N,m}-f^{\otimes m}|^2\omega_{N,m}'\\[-2mm]
\,\lesssim\,
m\int_{\Xd^m}|F_{N,m}-f^{\otimes m}|^2\omega_{N,m}'
+m\int_{\Xd^{m+1}}|F_{N,m+1}-f^{\otimes m+1}|^2\omega_{N,m+1}'
+\frac{m^3}{N^2}.
\end{multline*}
In terms of the generating function
\[Z(t,r)\,=\,\sum_{m=1}^Nr^m\bigg(\int_{\Xd^m}|F_{N,m}-f^{\otimes m}|^2\omega_{N,m}'+\frac{m^2}{N^2}\bigg),\qquad t\in[0,T_0],\quad 0\le r<1,\]
the above yields
\begin{equation*}
\partial_tZ(t,r)\,\le\,
C\partial_rZ(t,r),\qquad Z(0,r)\lesssim(1-r)^{-3}N^{-2}.
\end{equation*}
Solving this differential inequality, the claim~\eqref{eq:prop-chaos} follows.

\medskip\noindent
{\bf Step~2:} Proof that
\begin{equation}\label{eq:conv-GNm-re}
NG_{N,2}\overset*\rightharpoonup \bar G_2,\qquad\text{in $L^\infty(0,T_*;L^2(\omega_{\beta_*}^{\otimes 2}))$},
\end{equation}
where $\bar G_2$ satisfies
\begin{multline}\label{eq:conv-lim-GNm-re}
\partial_t \bar G_2
+\sum_{i=1}^2v_i\cdot\nabla_{x_i}\bar G_2
-\sum_{i=1}^2\triangle_{v_i}\bar G_2
+\sum_{i=1}^2 (K\ast f)(z_i)\cdot\nabla_{v_i}\bar G_{2}\\
+\sum_{i=1}^2(\nabla_{v}f)(z_i)\cdot\int_{\Xd}K(x_i-x_*)\bar G_2(z_{[2]\setminus\{i\}},z_*)\,d z_*\\
\,=\,
-\sum_{k\ne\ell}^2\Big(K(x_k-x_\ell)-(K\ast f)(x_k)\Big)\cdot(\nabla_vf)(z_k)f(z_\ell),
\end{multline}
with $\bar G_2|_{t=0}=0$.
In the previous section, cf.~\eqref{eq:conv-GNm}--\eqref{eq:conv-lim-GNm} for $m=2$, we already showed that the convergence~\eqref{eq:conv-GNm-re} holds up to a subsequence as $N\uparrow\infty$ and that any weak limit satisfies the above equation~\eqref{eq:conv-lim-GNm-re}. As the latter is easily checked to be well-posed in $L^2(\omega_{\beta_*}^{\otimes2})$ under the assumptions on~$K,f$, the conclusion follows.

\medskip\noindent
{\bf Step~3:} Proof that
\[N(F_{N,1}-f)\overset*\rightharpoonup h\qquad\text{in $L^\infty(0,T_*;L^2(\omega_{\beta_*}))$},\]
where $h$ satisfies
\begin{multline}\label{eq:defh}
\partial_th+v\cdot\nabla_{x}h
=\triangle_{v}h
-\int_\Xd K(x-x_*)\cdot\nabla_{v}\bar G_{2}(z,z_*)\,dz_*\\
- (K\ast h)\cdot\nabla_{v}f
- (K\ast f)\cdot\nabla_{v}h
+(K\ast f)\cdot\nabla_{v}f.
\end{multline}
By weak compactness, the bounds of Step~1 ensure the following convergence, up to a subsequence as~$N\uparrow\infty$,
\[N(F_{N,1}-f)\overset*\rightharpoonup h,\qquad\text{in $L^\infty(0,T_*;L^2(\omega_{\beta_*}))$},\]
for some limit $h$.
For $m=1$, the BBGKY equation~\eqref{eq:FNm-fm} for $F_{N,1}-f$ takes the form
\begin{multline*}
\partial_t(F_{N,1}-f)+v\cdot\nabla_{x}(F_{N,1}-f)
-\triangle_{v}(F_{N,1}-f)
=-\frac{N-1}N\int_\Xd K(x-x_*)\cdot\nabla_{v}G_{N,2}(z,z_*)\,dz_*\\
-\frac{N-1}N (K\ast(F_{N,1}-f))\cdot\nabla_{v}F_{N,1}
-\frac{N-1}N (K\ast f)\cdot\nabla_{v}(F_{N,1}-f)
+\frac1N (K\ast f)\cdot\nabla_{v}f.
\end{multline*}
Multiplying by $N$, passing to the weak limit, and recalling the strong convergence $F_{N,1}\to f$ in $L^\infty(0,T_*;L^2(\omega_{\beta_*}))$, cf.~Corollary~\ref{cor:estim-GNm-0}, as well as the weak convergence~\eqref{eq:conv-GNm-re} for correlations, we then find that any limit point~$h$ satisfies the desired equation~\eqref{eq:defh}. Well-posedness of the latter in $L^2(\omega_{\beta_*})$ yields the conclusion.

\medskip\noindent
{\bf Step~4:} Proof that for all $m\ge1$,
\[\qquad N\Big(F_{N,m}-(f+\tfrac1Nh)^{\otimes m}\Big)
\overset*\rightharpoonup\sum_{1\le k<\ell\le m}\bar G_2(z_k,z_\ell)f^{\otimes m-2}(z_{[m]\setminus\{k,\ell\}}),\qquad\text{in $L^\infty(0,T_*;L^2(\omega_{\beta_*}^{\otimes m}))$.}\]
Using the cluster expansion~\eqref{eq:cluster-exp} in form of
\[F_{N,m}-F_{N,1}^{\otimes m}=\sum_{\pi\vdash[m]\atop\sharp\pi<m}\prod_{B\in\pi}G_{N,B},\]
using bounds on correlations, cf.~Corollary~\ref{cor:estim-GNm-0}, as well as the weak convergence~\eqref{eq:conv-GNm-re}, we deduce
\[N(F_{N,m}-F_{N,1}^{\otimes m})\overset*\rightharpoonup\sum_{1\le k<\ell\le m}\bar G_{2}(z_k,z_\ell)f^{\otimes m-2}(z_{[m]\setminus\{k,\ell\}}).\]
Combining this with the result of Step~3, we deduce
\begin{multline*}
N(F_{N,m}-f^{\otimes m})
=N(F_{N,m}-F_{N,1}^{\otimes m})+N(F_{N,1}^{\otimes m}-f^{\otimes m})\\
\overset*\rightharpoonup
\sum_{1\le k<\ell\le m}\bar G_{2}(z_k,z_\ell)f^{\otimes m-2}(z_{[m]\setminus\{k,\ell\}})
+\sum_{k=1}^mf^{\otimes k-1}\otimes h\otimes f^{\otimes m-k},
\end{multline*}
and the claim follows.\qed

\section{Fluctuations of empirical measure}
This section is devoted to the proof of Corollary~\ref{cor:CLT}.
Let $\varphi\in C^\infty_c(\Xd)$ be a fixed test function. We split the proof into two steps.

\medskip\noindent
{\bf Step~1:} Convergence of the variance.\\
The variance of the empirical measure can be written as follows in terms of the $2$-point correlation function,
\[\Var\bigg[\int_\Xd\varphi\mu_N\bigg]=\frac{N-1}N\int_{\Xd^2}\varphi^{\otimes2}G_{N,2}+\frac1N\int_\Xd\Big(\varphi-\int_\Xd\varphi F_{N,1}\Big)^2F_{N,1}.\]
The convergence results of Corollary~\ref{cor:Bogo} then lead to the following characterization of the limiting variance,
\[\sigma(\varphi)^2\,:=\,\lim_{N\uparrow\infty}\Var\bigg[N^\frac12\int_\Xd\varphi\mu_N\bigg]\,=\,\int_{\Xd^2}\varphi^{\otimes2}\bar G_2+\int_\Xd\Big(\varphi-\int_\Xd\varphi f\Big)^2f,\]
where we recall that $\bar G_2$ is the solution of the Bogolyubov equation~\eqref{eq:Bogo}.
Similarly as e.g.\@ in~\cite[Section~5]{BD_2024}, we can easily check that this limiting variance can be reformulated as $\sigma(\varphi)^2=\Var[\int_\Xd\varphi\nu]$, where $\nu$ is the Gaussian process defined in the statement.

\medskip\noindent
{\bf Step~2:} CLT by the moment method.\\
From e.g.~\cite[Proposition~4.1]{D-21} (see also~\cite[Lemma~2.6]{BD_2024}), we recall the following link between correlation functions and cumulants of the empirical measure: for all $1\le m\le N$,
\begin{equation*}
\bigg|\kappa^m\bigg[\int_{\Xd}\varphi \mu_N\bigg]\bigg|
\,\lesssim_m\,\sum_{\pi\vdash\llbracket m\rrbracket}\sum_{\rho\vdash\pi}N^{\sharp\pi-\sharp\rho-m+1}\bigg|\int_{\Xd^{\sharp\pi}}\Big(\bigotimes_{B\in\pi}\varphi^{\sharp B}\Big)\Big(\prod_{D\in\rho}G_{N,\sharp D}(z_D)\Big)dz_\pi\bigg|.
\end{equation*}
Rescaling in $N$, counting the powers of $N$, and appealing to the weak convergence result~\eqref{eq:wL2bnd} of Theorem~\ref{th:main}, we deduce for all $m\ge3$, on $[0,T_*]$,
\[\lim_{N\uparrow\infty}\kappa^m\bigg[N^\frac12\int_{\Xd}\varphi \mu_N\bigg]\,=\,0.\]
By definition of cumulants, combining this with the convergence of the variance in Step~1, we deduce for all $m\ge1$, on $[0,T_*]$,
\[\lim_{N\uparrow\infty}\E\bigg[\Big(N^\frac12\int_{\Xd}\varphi (\mu_N-\E[\mu_N])\Big)^m\bigg]\,=\,\sigma(\varphi)^m\theta_m,\]
where $\theta_m$ stands for the $m$-th moment of a standard Gaussian variable. As $\nu$ is a centered Gaussian process with $\sigma(\varphi)^2=\Var[\int_\Xd\varphi\nu]$, note that $\sigma(\varphi)^m\theta_m=\E[(\int_\Xd\varphi\nu)^m]$.
By the moment method, a sequence of random variables converges in law to a Gaussian if and only if all its moments converge. This concludes the proof.\qed

\section{Case of bounded forces}\label{sec:Linfty}
In this section, we turn to the proof of Theorem~\ref{th:main-Linfty}, explaining how our correlation estimates can be improved a posteriori in the case $K\in L^\infty$.
We start by adapting Proposition~\ref{prop:HNm}, showing that linear correlation estimates now hold globally in time up to some exponential time growth.

\begin{prop}\label{prop:HNm-re}
In the setting of Theorem~\ref{th:main-Linfty}, we have for all $1\le m\le N$ and $t\ge0$,
\[\Big(\int_{\Xd^m}|H_{N,m}(t)|^2\omega_{\beta(t)}^{\otimes m}\Big)^\frac12\,\le\,(Ce^{Ct}m^2)^mN^{-\frac m2},\]
where we have set $\beta(t):=\frac\beta{1+4\beta t}$, for some constant $C$ only depending on $d,\beta,\|K\|_{L^\infty},\|f_\circ\|_{L^2(\omega_\beta)}$.
\end{prop}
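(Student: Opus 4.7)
The strategy follows the proof of Proposition~\ref{prop:HNm}, with two substantial simplifications due to $K\in L^\infty$. First, the Gibbs-type corrections involving $W$ are no longer needed in the weight, so we work directly with the pure Maxwellian $\omega_{\beta(t)}$, which is adapted to the free Fokker--Planck semigroup: a direct calculation using $\beta'(t)=-4\beta(t)^2$ yields a dissipation inequality of the form
\begin{multline*}
2\int h\,(L_{N,m}h)\,\omega_{\beta(t)}^{\otimes m}-\int h^2\,\partial_t\omega_{\beta(t)}^{\otimes m}\\
\ge 2\sum_{i=1}^m\int|\nabla_{v_i}h|^2\omega_{\beta(t)}^{\otimes m}+\tfrac12\beta(t)^2\sum_{i=1}^m\int|v_i|^2h^2\omega_{\beta(t)}^{\otimes m}-Cm\beta(t)\int h^2\omega_{\beta(t)}^{\otimes m},
\end{multline*}
where the drift contributions from $K\ast f\cdot\nabla_{v}$ and $\tfrac1N K(x_i-x_j)\cdot\nabla_{v_i}$ are absorbed into the $|v_i|^2$-dissipation via Young's inequality using $\|K\|_{L^\infty}$.

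Applying this with $h=H_{N,m}$ to the hierarchy of Lemma~\ref{lem:BBGKY-HNm}, every source term on the right-hand side factors through $K$ or $K\ast f$, both bounded by $\|K\|_{L^\infty}$. By Cauchy--Schwarz in the internal variable $z_*$, using $\int\omega_{\beta(t)}^{-1}(z_*)\,dz_*\lesssim\beta(t)^{-d/2}=O((1+t)^{d/2})$, and Young's inequality against the dissipation, this yields for all $1\le m\le N$ and $t\ge 0$
\[
\partial_t A_{N,m}\,\le\,CmA_{N,m}+CmA_{N,m+1}+\tfrac{Cm^3}{N^2}\bigl(A_{N,m-1}+A_{N,m-2}\bigr),
\]
where $A_{N,m}(t):=\int_{\Xd^m}|H_{N,m}|^2\omega_{\beta(t)}^{\otimes m}$ and the constant $C$ grows at most polynomially in $t$ (via $\beta(t)^{-1}$), a dependence absorbable into the final exponential factor.

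To apply Lemma~\ref{lem:hier}, we need a global a priori bound $A_{N,m}\le B^m$. Using $m^3/N^2\le m$ (valid since $m\le N$), the hierarchy closes on itself, and iterating the short-time generating function argument of Proposition~\ref{prop:HNm}---which works on intervals of length $T_*\sim C^{-1}$ and multiplies the a priori bound by a bounded factor at each step---yields the global exponential estimate $A_{N,m}(t)\le(Ce^{Ct})^m$. Feeding this into Lemma~\ref{lem:hier} with $R=N$ and parameter $A$ of order one (the chaotic initial condition $A_{N,m}(0)=\delta_{m,0}$ is compatible with any $A\ge 0$, the effective initial data being injected via the coupling $A_{N,0}=1$ through the $m^3/N^2$-term), we conclude $\sqrt{A_{N,m}(t)}\le(Ce^{Ct}m^2)^m N^{-m/2}$.

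The principal obstacle lies in the application of Lemma~\ref{lem:hier}: its conclusion's constant depends on the a priori bound $B$, but here $B=B(t)=Ce^{Ct}$ varies with time, and a naive application risks doubly exponential growth $e^{Ce^{Ct}mt}$. This is handled by a time-slicing argument, applying Lemma~\ref{lem:hier} on successive intervals $[k\tau,(k+1)\tau]$ of length $\tau_k\sim(CB_k)^{-1}$ on which $B_k$ is nearly constant, and tracking carefully how the parameter $A$ propagates across intervals using the bound at each endpoint as new initial data. The single-exponential factor $(Ce^{Ct})^m$ in the final estimate emerges from the compounding of bounded per-slice factors $e^{Cm\tau}$, mirroring the iteration scheme used within the proof of Lemma~\ref{lem:hier} itself.
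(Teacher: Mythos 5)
Your derivation of the differential hierarchy for $A_{N,m}$ with the pure Maxwellian weight $\omega_{\beta(t)}^{\otimes m}$ is in line with the paper, and so is the final appeal to Lemma~\ref{lem:hier} with $R=N$. The genuine gap is in how you produce the global a priori bound $A_{N,m}(t)\le B(t)^m$ that Lemma~\ref{lem:hier} requires. You claim it follows by ``iterating the short-time generating function argument of Proposition~\ref{prop:HNm} on intervals of length $T_*\sim C^{-1}$''. This does not work: the generating function $Z(t,r)=\sum_m r^mA_{N,m}(t)$ started from data bounded by $B^m$ only converges for $r<1/B$, and the transport inequality $\partial_tZ\le C\partial_rZ$ then yields bounds only on a time interval of length $\sim (CB)^{-1}$, at the end of which the admissible constant has at least doubled (to recover $A_{N,m}\le \rho^{-m}Z(\cdot,\rho)$ you must take $\rho$ strictly inside the shrunken radius). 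Iterating, the slice lengths decay geometrically and the total time covered stays of order $C^{-1}$: this is exactly the Cauchy--Kovalevskaya obstruction described at the start of Section~2, which the hierarchy itself cannot overcome. So your ``global exponential estimate $A_{N,m}\le(Ce^{Ct})^m$'' is unproved, and with it the whole application of Lemma~\ref{lem:hier} collapses, since that lemma is precisely the device whose purpose is to upgrade an \emph{independently obtained} global a priori bound into the sharp $N^{-m}$ scaling.

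The paper closes this gap by a different mechanism, which is the one new ingredient of the $L^\infty$ case and is absent from your proposal: a direct weighted $L^2$ estimate on the marginals themselves. Because $K\in L^\infty$, the coupling term in the BBGKY equation~\eqref{eq:BBGKY0} is bounded using $\int_\Xd F_{N,m+1}(\cdot,z_*)\,dz_*=F_{N,m}$, so a Gr\"onwall argument gives $\int_{\Xd^m}|F_{N,m}(t)|^2\omega_{\beta(t)}^{\otimes m}\le(e^{Ct}\|f_\circ\|_{L^2(\omega_\beta)}^2)^m$ globally in time, with the analogous bound for $f$; the alternating-sum definition~\eqref{eq:defin-HNm} of $H_{N,m}$ then transfers this to $A_{N,m}\le(Ce^{Ct})^{Cm}$, which is the a priori input fed into Lemma~\ref{lem:hier}. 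Your remaining remarks (polynomial growth of constants through $\beta(t)^{-d/2}$, and the need for some care because the a priori bound $B$ grows in time when invoking Lemma~\ref{lem:hier}) are reasonable observations, and the time-slicing you sketch is a plausible way to handle the latter point, on which the paper is terse; but as written your argument is missing the marginal-based a priori estimate, and the substitute you offer for it fails.
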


\begin{proof}
Compared to the proof of Proposition~\ref{prop:HNm}, as we now assume $K\in L^\infty(\T^{2d})^d$, we can replace the time-dependent weight $\omega_{N,m}$ by the following simpler version,
\begin{equation}\label{eq:def-tildeomem}
\omega_{m}(t;z_{[m]})
\,:=\,\exp\bigg[\frac{\beta}{2(1+4\beta t)}\sum_{i=1}^m|v_i|^2\bigg]\,=\,\omega_{\beta(t)}^{\otimes m}(z_{[m]}),
\end{equation}
and we start by noting that we have global exponential a priori bounds on weighted $L^2$ norms of marginals. This follows from~\cite{BJS-22}, but we include a short proof for convenience: by a direct computation starting from the BBGKY equations~\eqref{eq:BBGKY0} for marginals, we can estimate
\begin{multline*}
\partial_t\int_{\Xd^m} |F_{N,m}|^2\omega_{m}
\,\le\,-2\sum_{i=1}^m\int_{\Xd^m} |\nabla_{v_i}F_{N,m}|^2\omega_{m}
-\frac{\beta^2}{(1+4\beta t)^2}\sum_{i=1}^m\int_{\Xd^m} |v_i|^2|F_{N,m}|^2\omega_{m}\\
+\frac{\beta dm}{1+4\beta t}\int_{\Xd^m} |F_{N,m}|^2 \omega_{m}
+\frac{\beta\|K\|_{L^\infty}}{1+4\beta t}\frac{m}N\sum_{i=1}^m\int_{\Xd^m} |v_i| |F_{N,m}|^2 \omega_{m}\\
+2\frac{N-m}N\sum_{i=1}^m\int_{\Xd^{m}}\omega_{m} \Big(\int_{\Xd} F_{N,m+1}(\cdot,z_*)K(x_i-x_{*})\,dz_*\Big)\cdot\Big(\nabla_{v_i}+\frac{\beta}{1+4\beta t}v_i\Big)F_{N,m}.
\end{multline*}
Note that the last term can be bounded by
\begin{eqnarray*}
\lefteqn{2\frac{N-m}N\sum_{i=1}^m\int_{\Xd^{m}} \omega_{m}\Big(\int_{\Xd} F_{N,m+1}(\cdot,z_*)K(x_i-x_{*})\,dz_*\Big)\cdot\Big(\nabla_{v_i}+\frac{\beta}{1+4\beta t}v_i\Big)F_{N,m}}\\
&\le&2\|K\|_{L^\infty}\sum_{i=1}^m\int_{\Xd^{m}}\omega_{m}\Big(\int_\Xd F_{N,m+1}(\cdot,z_*)\,dz_*\Big)\Big|\Big(\nabla_{v_i}+\frac{\beta}{1+4\beta t}v_i\Big)F_{N,m}\Big|\\
&=&2\|K\|_{L^\infty}\sum_{i=1}^m\int_{\Xd^{m}} \omega_{m}F_{N,m}\Big|\Big(\nabla_{v_i}+\frac{\beta}{1+4\beta t}v_i\Big)F_{N,m}\Big|.
\end{eqnarray*}
Further using Young's inequality and taking advantage of the dissipation, we are led to
\begin{equation*}
\partial_t\int_{\Xd^m} |F_{N,m}|^2\omega_{m}
\,\le\,Cm\int_{\Xd^{m}}|F_{N,m}|^2\omega_{m},
\end{equation*}
for some constant $C$ only depending on $d,\beta,\|K\|_{L^\infty}$.
Hence, by Gr\"onwall's inequality, we deduce for all $1\le m\le N$ and $t\ge0$,
\begin{equation*}
\int_{\Xd^m} |F_{N,m}(t)|^2\,\omega_{m}
\,\le\, \Big(e^{Ct}\int_{\Xd} |f_\circ|^2e^{\frac\beta2|v|^2}\Big)^m.
\end{equation*}
Also note that for the mean-field dynamics we have a similar estimate, for all $t\ge0$,
\begin{equation*}
\int_\Xd|f(t)|^2\omega_1
\,\le\,
e^{Ct}\int_\Xd|f_\circ|^2e^{\frac\beta2|v|^2}.
\end{equation*}
Combining these two bounds and recalling the definition~\eqref{eq:defin-HNm} of linear correlations, we obtain the following global exponential a priori estimates, for all $1\le m\le N$ and $t\ge0$,
\begin{equation}\label{eq:apriori-tildeANm}
A_{N,m}\,:=\,\Big(\int_{\Xd^m} |H_{N,m}|^2\,\omega_{m}\Big)^\frac12
\,\le\,\Big(Ce^{Ct}\int_\Xd|f_\circ|^2e^{\frac\beta2|v|^2}\Big)^\frac m2.
\end{equation}
Next, repeating the computations towards~\eqref{eq:ANm-diff} in the proof of Proposition~\ref{prop:HNm}, we similarly find in the present setting, for all $1\le m\le N$ and $t\ge0$,
\begin{equation*}
\left\{\begin{array}{l}
\partial_t A_{N,m}\,\le\,
Cm A_{N,m}
+Cm A_{N,m+1}
+C\frac{m^3}{N^2}( A_{N,m-1}+ A_{N,m-2}),\\
A_{N,m}|_{t=0}=\mathds1_{m=0},
\end{array}\right.
\end{equation*}
for some constant $C$ only depending on $d,\beta,\|K\|_{L^\infty},\int_\Xd|f_\circ|^2e^{\frac\beta2|v|^2}$.
Appealing to Lemma~\ref{lem:hier} for this hierarchy of differential inequalities, together with the exponential a priori bounds~\eqref{eq:apriori-tildeANm}, the conclusion follows.
\end{proof}

With the above suboptimal estimates at hand, we can now conclude the proof of Theorem~\ref{th:main-Linfty}: we use the hierarchy of equations for nonlinear correlations, cf.~Lemma~\ref{lem:hier-corr}, and recover the optimal estimates by a direct induction.
We emphasize that for $K\in L^\infty$ the hierarchy for nonlinear correlations is indeed perfectly usable, as already exploited in~\cite{Hess_Childs_2023}. However, as this is a nonlinear hierarchy, it is not easy to directly use it to deduce correlation estimates: in our approach, it is critical to start from subcritical estimates on linear correlations, and only use the nonlinear hierarchy to get improvements.

\begin{proof}[Proof of Theorem~\ref{th:main-Linfty}]
Let $\omega_m$ be the time-dependent weight in~\eqref{eq:def-tildeomem}, and set for abbreviation
\[B_{N,m}\,:=\,\Big(\int_{\Xd^m}|G_{N,m}|^2\omega_m\Big)^\frac12,\qquad1\le m\le N,\quad t\ge0.\]
Arguing similarly as in the proof of Corollary~\ref{cor:estim-GNm-0}, the estimates on linear correlations in Proposition~\ref{prop:HNm-re} allow to deduce the following on nonlinear correlations: for all $1\le m\le N$ and $t\ge0$,
\begin{equation}\label{eq:est-BNm-0}
B_{N,m}(t)\,\le\,(Ce^{Ct}m^2)^m(N^{-\frac m2}+\mathds1_{m=1}),
\end{equation}
where henceforth multiplicative constants only depend on $d,\beta,\|K\|_{L^\infty}$, and $\int_\Xd|f_\circ|^2e^{\frac\beta2|v|^2}$.
In order to improve on these subcritical estimates, we now appeal to the hierarchy of equations for nonlinear correlations, cf.\@ Lemma~\ref{lem:hier-corr}.
By a direct estimate on this nonlinear hierarchy, using $K\in L^\infty$, similarly as in the proof of Proposition~\ref{prop:HNm-re} above, we obtain for all $1\le m\le N$ and $t\ge0$,
\begin{multline*}
\partial_t B_{N,m}^2\,\lesssim\, mB_{N,m}^2
+mB_{N,m+1}^2
+m\bigg(\sum_{\ell=1}^{m-2}\binom{m-1}{\ell}B_{N,\ell+1} B_{N,m-\ell}\bigg)^2\\
+ \frac{m^3}{N^2}\bigg( \sum_{\ell=0}^{m-2}\binom{m-2}{\ell} B_{N,\ell+1}B_{N,m-\ell-1}\bigg)^2
+\frac{m^3}{N^2} \bigg(\sum_{\ell=0}^{m-2}\sum_{r=0}^{m-\ell-2}\binom{m-1}{\ell,r} B_{N,\ell+1} B_{N,r+1} B_{N,m-\ell-r-1} \bigg)^2,
\end{multline*}
and thus, after time integration, with $B_{N,m}|_{t=0}\le C\mathds1_{m=1}$,
\begin{multline}\label{eq:estim-BNm}
B_{N,m}^2(t)\,\le\,Ce^{Ct}\mathds1_{m=1}
+Cm\int_0^te^{Cm(t-s)}B_{N,m+1}^2
+Cm\int_0^te^{Cm(t-s)}\bigg(\sum_{\ell=1}^{m-2}\binom{m-1}{\ell}B_{N,\ell+1} B_{N,m-\ell}\bigg)^2\\
+\frac{Cm^3}{N^2}\int_0^te^{Cm(t-s)}\bigg( \sum_{\ell=0}^{m-2}\binom{m-2}{\ell} B_{N,\ell+1}B_{N,m-\ell-1}\bigg)^2\\
+\frac{Cm^3}{N^2}\int_0^te^{Cm(t-s)}\bigg(\sum_{\ell=0}^{m-2}\sum_{r=0}^{m-\ell-2}\binom{m-1}{\ell,r} B_{N,\ell+1} B_{N,r+1} B_{N,m-\ell-r-1} \bigg)^2.
\end{multline}
From here, we show by a direct induction argument that the subcritical estimates~\eqref{eq:est-BNm-0} can be improved to the following: for all $1\le m\le N$ and $t,\theta\ge0$,
\begin{equation}\label{eq:rec-r}
B_{N,m}(t)\,\le\,(C_0e^{C_0t})^{3^\theta m}(m+\theta)^{2(m+\theta)}\big(N^{-\frac12(m+\theta)}+N^{1-m}\big).
\end{equation}
Choosing $\theta\ge m-2$, this will imply the desired conclusion.
We argue by induction on $\theta$. For $\theta=0$, this estimate~\eqref{eq:rec-r} already follows from~\eqref{eq:est-BNm-0}. Next, assuming that~\eqref{eq:rec-r} holds for some $\theta\ge0$, and inserting it into~\eqref{eq:estim-BNm}, provided $C_0\ge2C$, we immediately deduce that~\eqref{eq:rec-r} further holds with~$\theta$ replaced by~$\theta+1$. This concludes the proof.
\end{proof}

\subsection*{Acknowledgements}
M. Duerinckx acknowledges financial support from the European Union (ERC, PASTIS, Grant Agreement n$^\circ$101075879).\footnote{{Views and opinions expressed are however those of the authors only and do not necessarily reflect those of the European Union or the European Research Council Executive Agency. Neither the European Union nor the granting authority can be held responsible for them.}} P.-E. Jabin was partially supported by NSF DMS Grants 2508570 and 2205694.

\bibliographystyle{plain}
\bibliography{biblio}

\end{document}